\newtheoremstyle{cap}
{}
{}
{\itshape}
{}
{\scshape}
{.}
{4pt}
{}
\theoremstyle{cap}
\newtheorem{theorem}{Theorem}[section]
\newtheorem{lemma}[theorem]{Lemma}
\newtheorem{corollary}[theorem]{Corollary}
\newtheorem{prop}[theorem]{Proposition}
\newtheorem{assumption}{Assumption}
\theoremstyle{definition}
\theoremstyle{remark}
\newtheorem*{remark}{Remark}
\providecommand{\keywords}[1]
{
  \textbf{\textit{Keywords---}} #1 \\
}
\providecommand{\msc}[1]
{
  \textbf{\textit{Mathematics Subject Classification---}} #1 \\
}
\title{Convergence Rates for Empirical Measures of Markov Chains in Dual and Wasserstein Distances}
\author{Adrian Riekert\footnote{Applied Mathematics Münster: Institute for Analysis and Numerics, University of Münster, Germany; e-mail: \texttt{ariekert}\textcircled{\texttt{a}}\texttt{uni-muenster.de}}
}
\begin{document}

\setlength{\parindent}{0pt}

\renewcommand{\labelenumi}{\roman{enumi})}

\newcommand{\N}{\mathbb{N}}
\newcommand{\R}{\mathbb{R}}

\newcommand{\Z}{\mathbb{Z}}
\newcommand{\prob}{\mathbb{P}}
\newcommand{\E}{\mathbb{E}}

\newcommand{\dia}{\operatorname{diam}}
\newcommand{\var}{\operatorname{Var}}

\newcommand{\dx}{\mathrm{d}}
\newcommand{\X}{\mathcal{X}}
\newcommand{\cP}{\mathcal{P}}

\maketitle
\begin{abstract}
We consider a Markov chain on $\R^d$ with invariant measure $\mu$. We are interested in the rate of convergence of the empirical measures towards the invariant measure with respect to various dual distances,
including in particular the $1$-Wasserstein distance. The main result of this article is a new upper bound for the expected distance, which is proved by combining a Fourier expansion with a truncation argument. Our bound matches the known rates for i.i.d.~random variables up to logarithmic factors.
In addition, we show how concentration inequalities around the mean can be obtained.
\end{abstract}
\msc{60B10, 60J05, 65C05}
\keywords{Empirical Measure, Markov Chains, Wasserstein Distance, Concentration}
\section{Introduction and main results}
\subsection{Empirical measures}
Let $X_0, X_1, X_2, \ldots$ be a Markov chain on $\R^d$ with invariant probability distribution $\mu$. For $n \in \N$ we define the empirical measure
\begin{equation*}
\mu_n = \frac{1}{n} \sum_{i=1}^n \delta_{X_i},
\end{equation*}
a random probability measure on $\R^d $.
Under suitable conditions these measures will converge to $\mu$ as $n \to \infty$. The purpose of this article is to quantify the rate of convergence with respect to various dual distances of the form
\begin{equation}
d_S ( \mu , \nu ) = \sup\nolimits_{f \in S} | \mu ( f ) - \nu ( f ) |
\end{equation}
for a suitable class of functions $S \subseteq C ( \R^d , \R )$.
In particular, if $S $ is the class of $1$-Lipschitz functions then by Kantorovich duality $d_S ( \mu , \nu )$ agrees with $W_1 ( \mu , \nu ) $, the $1$-Wasserstein distance (see Villani~\cite{Villani.2009}).
Analyzing the convergence rate of empirical measures with respect to the Wasserstein distance is a classical problem with numerous applications, including, e.g., clustering (\cite{Laloe.2010}), density estimation (\cite{Bolley.2006}), and Monte Carlo integration. 
The special case where the $X_i$ are i.i.d.\ with common distribution $\mu$ has been studied extensively, see, e.g., \cite{Dudley.1969,Ajtai.1984,Dereich.2013,Boissard.2014,Weed.2019}. The most general and tight results (\cite{Fournier.2015, Lei.2020}) state that $\E [W_1(\mu, \mu_n)]$ is of order $n^{-\nicefrac{1}{d}}$ if $d \geq 3$ and $\mu$ has a finite $q$-th moment for some $q > \frac{d}{d-1}$.  \\
Some of the proofs for the i.i.d.\ case can be adapted to Markov chains, but usually only under strong additional assumptions, such as absolute continuity of the initial distribution with respect to $\mu$ (see, e.g., Fournier \& Guillin~\cite{Fournier.2015} and Boissard \& Le Gouic~\cite{Boissard.2014}). 
This requires that one already has access to some approximation of $\mu$ to start the Markov chain with, which is not always the case in applications.
We will instead follow the approach from Kloeckner~\cite{Kloeckner.2018}, which does not need such an assumption on the initial distribution.
Since the convergence rate $n^{- \nicefrac{1}{d} }$ becomes extremely slow for large dimensions $d$ -- a phenomenon often referred to as the "curse of dimensionality" -- we will also consider different function classes with stronger smoothness assumptions, which significantly improves the convergence rate in high dimensions.

\subsection{Contractive Markov chains}

Let $\cP(\R^d)$ denote the set of Borel probability measures on $\R^d$ and $\cP_1 ( \R^d)$ the set of measures in  $\cP(\R^d)$ with a finite first moment. 
For $s \in \N$ let $C^s ( \R^d)$ be the set of $(s-1)$-times continuously differentiable functions with a Lipschitz continuous $(s-1)$-th derivative. On $C^s$ we consider the norm
\begin{equation*}
\| f \| _{C^s } = \max_{k \in \{ 1, \ldots, s-1\}} \max_{\alpha \in \{1, \ldots, d \}^k} \left\| \frac{\partial^k f }{\partial x_{\alpha_1} \ldots \partial x_{\alpha_k } } \right\| _\infty 
+ \max_{\alpha \in \{1, \ldots, d \}^{s-1}} \operatorname{Lip} \left( \frac{\partial^{s - 1 } f }{\partial x_{\alpha_1} \ldots \partial x_{\alpha_{s-1} } } \right),
\end{equation*}
where $\operatorname{Lip}$ denotes the Lipschitz constant.
Let $C^s_1 ( \R^d ) := \{ f \in C^s ( \R^d ) \colon \| f \|_{C^s } \} \le 1$.
In particular, $C^1 _ 1 ( \R ^d )$ is the set of $1$-Lipschitz functions.
Since functions in $C^s_1 ( \R^d )$ are integrable with respect to any measure $\mu \in \cP_1 ( \R^d )$, 
the distance 
\begin{equation}
d_{C^s_1 ( \R^d )} ( \mu , \nu ) = \sup\nolimits_{f \in C^s _ 1 } | \mu ( f ) - \nu ( f ) |
\end{equation}
is well-defined for $\mu , \nu \in \cP_1 ( \R^d )$. For $s=1$ this is precisely the $1$-Wasserstein distance.

Let $P(x, \dx y)$ be a Markov kernel on $\R^d$ and $X_0, X_1, X_2, \ldots$ the corresponding time-homogeneous Markov chain defined on a probability space $(\Omega, \mathcal{F}, \prob)$, with initial distribution $X_0 \sim \gamma_0 \in \cP(\R^d)$. Denote by $P^n$ the $n$-fold iteration of $P$.  
 As usual, we introduce the averaging operator
\begin{equation*}
    (Pf)(x) = \int_{\R^d} f(y) P(x, \dx y)
\end{equation*}
for bounded or nonnegative measurable functions $f$, and similarly the action on measures $\nu$
\begin{equation*}
    (\nu P)(B) = \int_{\R^d} P(y, B) \dx \nu( y), \quad B \in \mathcal{B}(\R^d).
\end{equation*}
We require the following contractivity assumption on the transition kernel.
\begin{assumption} \label{assric}
There are constants $D \geq 1$ and $\kappa \in (0,1)$ such that
\begin{equation*}
    W_1(P^n(x, \cdot), P^n(y, \cdot)) \leq D \kappa ^n |x-y|
\end{equation*}
for all $n \in \N$ and $x,y \in \R^d$.
\end{assumption}
In particular, this means that $P^n(x, \cdot) \in \cP_1(\R^d)$ for all $n \in \N$. In the case $D=1$, this assumption is equivalent to the Markov chain having uniformly positive Ricci curvature in the sense of Ollivier \cite{Ollivier.2009}. In this case, it is enough to require the condition for $n=1$, then the estimate for general $n$ follows by iterated application. Intuitively, the assumption means that if we start two Markov chains at different points $x$ and $y$ then the chains can be coupled in such a way that they approach each other as $n \to \infty$. See \cite{Ollivier.2009} for an introduction to this topic on general metric spaces and several examples of Markov chains which satisfy the assumption. Often one can only ensure the Ricci curvature to be positive by choosing a different (equivalent) metric, which results in the additional factor $D$. This weaker condition is still sufficient for our proofs. \\
The assumption implies a similar relation for arbitrary initial distributions.
\begin{lemma}[\cite{Ollivier.2009}, Proposition 20] \label{w1start}
   If \cref{assric} holds then for any $\mu_0, \nu_0 \in \cP_1(\R^d)$ and $n \in \N$ we have
   \begin{equation*}
       W_1(\mu_0 P^n, \nu_0 P^n) \leq D \kappa ^n W_1(\mu_0, \nu_0).
   \end{equation*}
   
\end{lemma}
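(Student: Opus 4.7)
The proof should combine Kantorovich duality (Lemma~\ref{kantdu}) with the single-point contraction in Assumption~\ref{assric}. The key observation is that if $f \in \textnormal{Lip}_1(\R^d)$, then $P^n f$ is automatically Lipschitz with constant $D\kappa^n$. Once we have this, the result follows by a short duality argument; the rest is just checking integrability so that all the dual formulas apply.

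\textbf{Step 1: Show $P^n f$ is $D\kappa^n$-Lipschitz for every $f \in \textnormal{Lip}_1(\R^d)$.} Fix $x,y \in \R^d$. Both $P^n(x,\cdot)$ and $P^n(y,\cdot)$ lie in $\cP_1(\R^d)$ by the remark following \cref{assric}, so $f$ is integrable against each. Using \cref{kantdu} applied to these two measures together with Assumption~\ref{assric},
\begin{equation*}
    |(P^n f)(x) - (P^n f)(y)| = \left| \int f\,\dx P^n(x,\cdot) - \int f\,\dx P^n(y,\cdot)\right| \leq W_1(P^n(x,\cdot), P^n(y,\cdot)) \leq D\kappa^n |x-y|.
\end{equation*}
Hence $g := (D\kappa^n)^{-1} P^n f$ belongs to $\textnormal{Lip}_1(\R^d)$.

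\textbf{Step 2: Transfer to arbitrary initial laws via duality.} Note the adjoint identity $(\mu_0 P^n)(f) = \mu_0(P^n f)$, valid by Fubini once we check integrability: since $P^n f$ is Lipschitz and hence of at most linear growth, $\mu_0(|P^n f|) < \infty$ because $\mu_0 \in \cP_1(\R^d)$, and similarly for $\nu_0$. Therefore, using Step~1 and then \cref{kantdu} applied to $\mu_0, \nu_0$,
\begin{equation*}
    \left|(\mu_0 P^n)(f) - (\nu_0 P^n)(f)\right| = D\kappa^n \left| \mu_0(g) - \nu_0(g)\right| \leq D\kappa^n W_1(\mu_0, \nu_0).
\end{equation*}
Taking the supremum over $f \in \textnormal{Lip}_1(\R^d)$ and applying \cref{kantdu} once more, this time to $\mu_0 P^n, \nu_0 P^n$ (which are in $\cP_1(\R^d)$ by the same reasoning, since $\mu_0 P^n(|\cdot|) \leq \mu_0(|\cdot|) + D\kappa^n \mu_0(|\cdot - 0|) + \text{const}$ via Step~1 with $f(x)=|x|$), yields the desired bound.

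\textbf{Main obstacle.} The argument itself is essentially one line once duality is set up; the only slightly delicate point is justifying that $P^n f$ is $\mu_0$- and $\nu_0$-integrable so that the pairing $(\mu_0 P^n)(f) = \mu_0(P^n f)$ makes sense in both directions. This is handled by the linear growth of Lipschitz functions combined with the standing first-moment hypothesis. Beyond that, the coupling-based alternative (pick an optimal plan $\pi$ for $(\mu_0,\nu_0)$, push forward through $P^n$ using measurable selection of optimal kernel couplings, and integrate the contraction) works too but requires measurability arguments that the dual route avoids.
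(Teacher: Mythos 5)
Your proof is correct; note, however, that the paper does not actually supply a proof of this lemma, citing it directly from Ollivier (Proposition 20), whose argument is the coupling-based one you allude to at the end: take an optimal plan $\pi$ for $(\mu_0,\nu_0)$, glue to it (near-)optimal couplings of $P^n(x,\cdot)$ and $P^n(y,\cdot)$ for each $(x,y)$, and integrate the pointwise contraction against $\pi$. That route requires a measurable-selection argument, which your dual route cleanly sidesteps: Step 1 is exactly the statement that $P^n$ maps $\textnormal{Lip}_1(\R^d)$ into $D\kappa^n\cdot\textnormal{Lip}_1(\R^d)$, Step 2 uses the adjoint identity $(\mu_0 P^n)(f)=\mu_0(P^n f)$, and the final appeal to \cref{kantdu} for $\mu_0P^n,\nu_0P^n$ is legitimate once you know these are in $\cP_1(\R^d)$, which your linear-growth observation handles. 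Both proofs are standard and give the same constant; the duality route is arguably shorter and closer in spirit to how the rest of this paper operates (everything downstream, e.g.\ the proof of \cref{hold1}, also goes through the dual formulation). One small blemish: the parenthetical bound in Step 2 is garbled --- $\mu_0(|\cdot|)$ and $\mu_0(|\cdot - 0|)$ denote the same quantity. What you mean is
\begin{equation*}
\mu_0 P^n(|\cdot|) = \mu_0(P^n|\cdot|) \le (P^n|\cdot|)(0) + D\kappa^n\,\mu_0(|\cdot|) < \infty,
\end{equation*}
with $(P^n|\cdot|)(0)=\int|y|\,P^n(0,\dx y)<\infty$ since $P^n(0,\cdot)\in\cP_1(\R^d)$; the first inequality uses the $D\kappa^n$-Lipschitz bound from Step 1 applied to $f(x)=|x|$.
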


Since the space $\cP_1(\R^d)$ is complete, the Banach fixed point theorem implies that there is a unique invariant probability measure $\mu \in \cP_1 ( \R^d )$ for the Markov chain (i.e. $\mu P = \mu$). Moreover, for any initial distribution $\gamma_0 \in \cP_1(\R^d)$  we have $\lim_{n \to \infty} W_1(\gamma_0 P^n, \mu) = 0$ exponentially fast.

\subsection{Rate of convergence in expectation}

Our goal is to find estimates from above for the quantity
\begin{equation*}
\E \left[d_{C^s _ 1 ( \R^d ) } (\mu_n, \mu) \right] = \E \left[ \sup\nolimits_{f \in C^s_1 (\R^d)} |\mu_n(f)-\mu(f)|\right].
\end{equation*}
Hence we need to obtain uniform bounds over $f \in C^s _ 1 ( \R^d )$. To deal with this problem (in the case $s=1$),
Boissard~\cite{Boissard.2011} uses approximations in terms of the covering numbers of the set $\textnormal{Lip}_1(K) = C_1^1 ( K ) $ for a compact subset $K \subset \R^d$. The proofs assume that the transition kernels satisfy a transportation inequality, which is equivalent to an exponential moment condition and therefore a rather strong assumption. 

To obtain uniform bounds in $f$, we will instead follow the approach by Kloeckner~\cite{Kloeckner.2018} and employ an approximation of $f$ by its Fourier series. For $C^s$-functions the Fourier series converges uniformly on compact sets and reasonably fast. Kloeckner uses this to prove that if the Markov chain is supported on a compact set $K \subset \R^d$ 
then there is a constant $C$ depending on $K$, $d$, and $D$ such that for all $n$ large enough one has $\E [d_{C^s_1 ( \R^d ) } (\mu, \mu_n)] \leq C \Gamma_{d , s} (n ' )$, where 
\begin{equation} \label{def:gamma}
\Gamma_{d,s} ( n ' ) =
\begin{cases}
\frac{(\log n')^{d-2 s + \nicefrac{s}{d}}}{(n')^{\nicefrac{s}{d}}},
\quad & d > 2 s \\
\frac{\log n' }{ (n')^{ \nicefrac{1}{2}} } ,
\quad & d = 2 s \\
\frac{(\log n')^{ \nicefrac{d}{2 s + 1}}}{(n')^{\nicefrac{1}{2}}},
\quad & d < 2 s,
\end{cases} 
\end{equation}
and $n'=(1-\kappa)n$ \cite[Theorem 1.1]{Kloeckner.2018}.
In particular, for sufficiently large $s$ the rate of convergence is $n^{ \nicefrac{1}{2}}$ up to logarithmic terms and thus reasonably fast even in high dimensions $d$.
For the case $s=1$, where $d_{C^1_1 } = W_1$, 
the rate is only a power of logarithm slower than the one for i.i.d. random variables with common distribution $\mu$ mentioned before.
To deal with the unboundedess of functions in $C_1^s ( \R^d )$ we will use a truncation argument. For this we need the following moment assumption.

\begin{assumption} \label{assmom}
	There exists a convex, non-decreasing, left-continuous $\Phi \colon [0, \infty ) \to [0, \infty ]$ with $\Phi ( 0 ) = 0$ and 
	$\lim_{x \to \infty} \frac{\Phi ( x ) }{ x } = \infty $
	such that $\sup_{n \in \N_0} \E [ \Phi ( | X_n | ) ] \le 1$.
\end{assumption}

In particular, $\Phi$ is a Young function in the sense of O'Neill~\cite{ONeil.1965}.
Typical examples include $\Phi  ( x ) = x ^q$ for some $q>1$ (corresponding to boundedness of the $q$-th moment)
and $\Phi ( x ) =  e^{\beta x } - 1 $ for some $\beta > 0$.
The special case of a compact support $K \subseteq \R^d$ is also included, by taking $\Phi ( x ) = x$ for $x \le \sup_{y \in K } \| y \|$ and $\Phi ( x ) = \infty $ else. The requirement that the upper bound equals $1$ is arbitrary and can always be obtained by a rescaling of $\Phi$.

Observe that Assumption \ref{assmom} implies that the initial distribution $\gamma_0$ satisfies $\int \Phi ( | x | ) \, \dx \gamma_0 \le 1$. This is the only assumption on $\gamma_0$, no absolute continuity or further regularity is needed. Note also that Assumption \ref{assmom} implies for the invariant measure $\mu$
that
$\int \Phi ( | x | ) \, \dx \mu \le 1$. 

Our main result about the speed of convergence of $\mu_n$ to $\mu$ is the following. We will always write $\lesssim$ for inequalities which hold up to a constant depending on $d$, $s$, $\Phi$, and $D$.
\begin{theorem} \label{thmmarkov}
	Suppose that Assumptions \ref{assric} and \ref{assmom} are satisfied and let $r \in \{ 1, \ldots, s \}$.
	With $n' = (1-\kappa)n$ it holds for all large enough $n$ that
	\begin{equation*}
	\E [d _{C^s _ 1 ( \R^d ) } (\mu, \mu_n)] \lesssim 
	\inf_{R \ge 1} \left[ R^r \Gamma_{d,r } ( n') + \frac{R}{\Phi(R) } \right].
	\end{equation*}
\end{theorem}

Note that $\Gamma_{d,r} (n)$ is defined in \eqref{def:gamma} above.
The speed of convergence in \cref{thmmarkov} is proportional to $ 1 - \kappa $. Hence for $\kappa$ close to $1$ we need more samples of $X_i$ to
obtain a good approximation compared to the independent case.
For the sake of concreteness, let us state two consequences for particular choices of $\Phi$. 
\begin{corollary} Suppose that Assumptions \ref{assric} and \ref{assmom} are satisfied, let $r \in \{ 1, \ldots, s \}$, and write $n' = (1-\kappa)n$.
	\begin{enumerate} [(i)]
		\item If $\Phi(x) = x^q$ for some $q > 1$
		then it holds for all large enough $n$ that
		\begin{equation*}
		\E [d _{C^s _ 1 ( \R^d ) } (\mu, \mu_n)] \lesssim 
		\left( \Gamma_{d,r} ( n') \right)^{\frac{q-1}{q+r-1}}.
		\end{equation*}
		\item If $\Phi(x) = e^{\beta x } - 1$ for some $\beta > 0$
		then
		it holds for all large enough $n$ that
		\begin{equation*}
		\E [d _{C^s _ 1 ( \R^d ) } (\mu, \mu_n)] \lesssim  (\log n')^{ r } \Gamma_{d , r} ( n') .
		\end{equation*}
	\end{enumerate} 
\end{corollary}
The first part can be obtained from Theorem \ref{thmmarkov}
by setting $R = (\Gamma_{d,r} ( n' ) )^{ (1-r-q)^{-1}}$,
which satisfies $R>1$ for $n$ large enough.
For large $q$ this rate comes close to the convergence rate from the compact case.
The second statement follows from Theorem \ref{thmmarkov}
by setting $R = - \beta^{-1} \log \Gamma_{d,r} (n') $, which satisfies $R>1$ for $n$ sufficiently large.
In this case our rate matches the result from the compact case up to a logarithmic power of $n$.
Note that
if $s > \lceil d/2 \rceil $ we can always take $r = \lceil (d+1)/2 \rceil$ to obtain an optimal convergence rate.

To prove \cref{thmmarkov} we will approximate a function $f \in C^s _ 1 ( \R^d ) $ by its Fourier series uniformly on a compact set $K=[-R , R ]^d$. The regularity of $f$ allows us to bound the approximation error
and
the Fourier coefficients independently of $f$. The integrals over the complement of $K$ can by estimated by using the moment condition and the fact that any function in $C^s _ 1 ( \R^d )$ is dominated by $ \Phi ( |x| )$ for large $|x|$ due to the assumptions on $\Phi$. 

In order to verify \cref{assmom} in applications, one can use the following simple criterion.
\begin{prop}
Let $f\colon  \R^d \to [0, \infty ]$ be a measurable function with $\E f(X_0) < \infty$. Suppose that there are constants $C < \infty$ and $\gamma \in (0,1)$ such that $(Pf)(x) \leq \gamma f(x)+C$ for each $x \in \R^d$. Then one has $\sup_{n \in \N_0} \E f(X_n) < \infty$.
\end{prop}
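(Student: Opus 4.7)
The plan is to apply a standard Foster--Lyapunov drift argument and conclude by induction on $n$. The key observation is that the hypothesis $(Pf)(x) \le \gamma f(x) + C$ is precisely a one-step contraction inequality for the conditional expectation of $f(X_{n+1})$ given $X_n$, so iterating it geometrically controls $\E f(X_n)$ uniformly in $n$.

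First, I would use the Markov property to rewrite the drift inequality as
\begin{equation*}
    \E [ f(X_{n+1}) \mid \mathcal{F}_n ] = (Pf)(X_n) \leq \gamma f(X_n) + C \quad \text{a.s.},
\end{equation*}
where $\mathcal{F}_n = \sigma(X_0, \ldots, X_n)$. Taking expectations and using the tower property gives the scalar recursion
\begin{equation*}
    \E f(X_{n+1}) \leq \gamma \, \E f(X_n) + C.
\end{equation*}
A small point to check is that each term is finite: starting from the hypothesis $\E f(X_0) < \infty$, the inequality above propagates finiteness inductively, since $f \ge 0$ makes the left-hand side well-defined in $[0,\infty]$.

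Iterating the recursion and summing the resulting geometric series yields
\begin{equation*}
    \E f(X_n) \leq \gamma^n \, \E f(X_0) + C \sum_{k=0}^{n-1} \gamma^k \leq \E f(X_0) + \frac{C}{1-\gamma}
\end{equation*}
for every $n \in \N_0$, using $\gamma \in (0,1)$ to bound the geometric sum by $1/(1-\gamma)$. Taking the supremum in $n$ gives the claim, and indeed provides an explicit bound $\sup_{n \in \N_0} \E f(X_n) \le \E f(X_0) + C/(1-\gamma)$.

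There is essentially no hard step here; the only mild subtlety is ensuring the conditional expectation manipulation is rigorous for a possibly unbounded nonnegative $f$, which is handled by the standing assumption that the kernel $P$ acts on nonnegative measurable functions (as recalled earlier in the paper) together with the inductive finiteness of $\E f(X_n)$.
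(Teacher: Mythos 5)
Your proof is correct and follows essentially the same route as the paper: both derive the scalar recursion $\E f(X_{n+1}) \le \gamma\, \E f(X_n) + C$ from the Markov/tower property and then iterate it (the paper normalizes $C$ so that $\E f(X_0) \le C/(1-\gamma)$ and runs the induction on the single bound $C/(1-\gamma)$, while you unroll the geometric series explicitly; these are cosmetically different presentations of the same argument).
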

\begin{proof}
We may assume that $\E f(X_0) \leq \frac{C}{1-\gamma}$, otherwise one can replace $C$ with a larger constant. Then we show by induction that $\E f(X_n) \leq \frac{C}{1-\gamma}$ for each $n$. By assumption this holds for $n=0$, and moreover 
\begin{equation*}
    \E f(X_{n+1}) = \E \left[ (P f)(X_n) \right] \leq \gamma \E f(X_n)+C.
\end{equation*}
Hence $\E f(X_n) \leq \frac{C}{1-\gamma}$ implies the same estimate for $n+1$, which completes the proof.
\end{proof}
If the condition $Pf \leq \gamma f+C$ holds for $f ( x ) = \Phi ( | x | )$, then \cref{assmom} of the theorem will be satisfied. 

\subsection{Concentration}
In addition to estimating the expectation of $d _ S (\mu, \mu_n)$ it is also of interest how well the distance concentrates around its expected value. In this section the Markov chain can be supported on an arbitrary Polish metric space $\X$. If the state space is bounded one can use standard bounded difference methods to obtain the following concentration inequality.

\begin{theorem}\label{markcon}
	Suppose that $(X_n)$ is an exponentially contracting Markov chain in the sense of Assumption \ref{assric}, with constants $D = 1$ and $\kappa <1$, taking values in a metric space $\X$ with $\dia (\X) \leq 1$,
	and let $S \subseteq \operatorname{Lip}_1 ( \X)$
	be an arbitrary function class.  Then for all $t \geq 0$ one has
	\begin{equation*}
	\prob \left( d_S (\mu, \mu_n) \geq \E [d_S (\mu, \mu_n)] + t \right) \leq \exp \left( -2(1-\kappa)^2 \cdot nt^2 \right).
	\end{equation*}
\end{theorem}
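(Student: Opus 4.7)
The plan is to treat $F := W_1(\mu, \mu_n)$ as a function of $(X_1, \ldots, X_n)$ and control its fluctuations via the Doob martingale for the filtration $\mathcal{F}_i := \sigma(X_1, \ldots, X_i)$ (with $\mathcal{F}_0$ trivial). Writing $D_i := \E[F \mid \mathcal{F}_i] - \E[F \mid \mathcal{F}_{i-1}]$ so that $F - \E F = \sum_{i=1}^n D_i$, I will show that each $D_i$ has conditional range at most $\frac{1}{n(1-\kappa)}$ given $\mathcal{F}_{i-1}$, and then invoke the bounded-range Azuma--Hoeffding inequality.

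The core estimate is a Lipschitz-type bound on $g(y) := \E[F \mid X_1, \ldots, X_{i-1}, X_i = y]$ in its last argument. I would fix $x, x' \in \X$ with $X_1, \ldots, X_{i-1}$ frozen and construct a Markov coupling of the two continuations starting from $x$ and $x'$ by iterating the $W_1$-optimal single-step coupling of $P(a, \cdot)$ and $P(b, \cdot)$; this coupling exists because $\X$ is Polish, and its expected cost is at most $\kappa\, d(a, b)$ since $D = 1$ in \cref{assric}. Iteration gives coupled trajectories $(X_j, X_j')_{j \geq i}$ with $\E\, d(X_j, X_j') \leq \kappa^{j-i} d(x, x')$. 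Pairing $X_j$ with $X_j'$ for $j \geq i$, and $X_k$ with itself for $k < i$, yields an explicit transport plan between the empirical measures $\mu_n$ and $\mu_n'$ produced by the two continuations, so
\begin{equation*}
|g(x) - g(x')| \leq \E\, W_1(\mu_n, \mu_n') \leq \frac{1}{n}\sum_{j=i}^n \kappa^{j-i} d(x, x') \leq \frac{d(x, x')}{n(1-\kappa)}.
\end{equation*}

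Since $\dia(\X) \leq 1$, the oscillation of $g$ on $\X$ is at most $\frac{1}{n(1-\kappa)}$; writing $D_i = g(X_i) - \E[g(X_i) \mid \mathcal{F}_{i-1}]$ shows that the conditional range of each $D_i$ is bounded by the same quantity. The bounded-range Azuma--Hoeffding inequality applied to $\sum_{i=1}^n D_i$ with uniform range $c_i \equiv \frac{1}{n(1-\kappa)}$ then gives
\begin{equation*}
\prob(F - \E F \geq t) \leq \exp\!\left(-\frac{2 t^2}{\sum_{i=1}^n c_i^2}\right) = \exp\!\bigl(-2(1-\kappa)^2 n t^2\bigr),
\end{equation*}
which is the claim. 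The hardest part of the plan is the multi-step coupling construction: one must propagate the one-step contraction from \cref{assric} jointly along the two chains so the stepwise bounds telescope into the geometric factor $(1-\kappa)^{-1}$, and this is exactly where the hypothesis $D = 1$ is used, since a constant $D > 1$ would destroy the iteration of the single-step coupling.
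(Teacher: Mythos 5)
Your plan is correct and arrives at exactly the paper's bound, but the central estimate is obtained by a different route. The paper, like you, reduces to a bounded-conditional-range martingale inequality (McDiarmid's Theorem~3.7, the same constant-$2$ Azuma--Hoeffding variant you invoke), and like you it needs to show that conditioning on $X_i = x$ versus $X_i = x'$ changes $\E[W_1(\mu,\mu_n)\mid \mathcal F_{i-1}, X_i=\cdot]$ by at most $\frac{d(x,x')}{n(1-\kappa)}$. To get that Lipschitz bound, however, the paper never builds an explicit bivariate coupling: it instead proves a general lemma (stated as \cref{markovlip}) that if $f:\X^{m+1}\to\R$ is $L$-Lipschitz for the normalized $\ell^1$-product metric, then $x\mapsto\E^x f(X_0,\dots,X_m)$ is $L\sum_{j=0}^m\kappa^j$-Lipschitz, and this lemma is proved by \emph{induction} on $m$, passing at each step through the Kantorovich dual formulation of $W_1(P(x,\cdot),P(y,\cdot))\le\kappa\,d(x,y)$. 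Your route replaces this induction with a direct construction: iterate the one-step $W_1$-optimal coupling of $P(a,\cdot)$ and $P(b,\cdot)$ to produce coupled trajectories with $\E\,d(X_j,X_j')\le\kappa^{j-i}d(x,x')$, then transport $\mu_n$ to $\mu_n'$ by matching indices. Both yield the same $\frac{1}{n(1-\kappa)}$, and both use $D=1$ in the same essential place (to make the one-step contraction iterate without a multiplicative blow-up). The trade-off is that the explicit coupling is more concrete and visibly identifies where the geometric sum comes from, but it does rely on a \emph{measurable} choice of optimal couplings $(a,b)\mapsto\pi_{a,b}$ so that the iterated bivariate process is a genuine Markov chain on $\X\times\X$; this is standard for Polish spaces via measurable selection, but it is a technical point you should at least acknowledge. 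The paper's duality-plus-induction argument sidesteps that issue entirely, which is presumably why the author chose it.

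Two small notational points worth double-checking in your write-up: (i) since $\dia(\X)\le 1$, the oscillation bound on $g$ gives a conditional range of $D_i$ of at most $\frac{1}{n(1-\kappa)}$, and you must use the \emph{range} form of Azuma--Hoeffding (with $\sum c_i^2$ in the denominator and constant $2$ in the exponent), not the symmetric $|D_i|\le c_i$ form, to match the stated constant — you do this correctly, but it is the step where a careless substitution would lose a factor of $4$; (ii) your sum $\sum_{j=i}^n\kappa^{j-i}$ should be read as running over at most $n-i+1$ terms, which you correctly dominate by $(1-\kappa)^{-1}$.
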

Note that Theorem \ref{markcon} applies in particular to the $1$-Wasserstein distance with $S = \operatorname{Lip}_1 ( \X )$, the set of $1$-Lipschitz functions.
In the case of i.i.d.\ random variables taking values in a metric space with diameter $ \le 1 $, Weed \& Bach~\cite{Weed.2019} showed that $\prob \left( W_1(\mu, \mu_n) \geq \E [W_1(\mu, \mu_n)] + t \right) \leq \exp \left( -2 nt^2 \right)$.
We obtain the same sub-Gaussian concentration rate, up to the factor $(1- \kappa)^2$. Observe that the rate of concentration does not depend on the dimension or any other specific properties of the state space -- the result holds for an arbitrary bounded metric space. It should be noted that Theorem \ref{markcon} improves on \cite[Theorem 5.4]{Kloeckner.2018} by a factor of $4$ in the exponent. 

In the noncompact case, strong moment assumptions are needed in order to obtain useful concentration inequalities. We say that a measure $\mu \in \cP _1 (\X)$ satisfies a transportation inequality $T_1(C)$ \cite{Bobkov.1999,Bolley.2005} if for all $\nu \in \mathcal{P}(\X)$ with $\nu \ll \mu$ one has
        \begin{equation*}
            W_1(\mu, \nu) \leq \sqrt{2C H(\nu \mid \mu)},
        \end{equation*}
        where $H(\nu \mid \mu)$ is the relative entropy.
Using the Lipschitz properties of the dual metric, this allows us to prove a concentration inequality for $d_S (\mu, \mu_n)$.
\begin{theorem} \label{cor:conctransport}
	Let $(X_n)$ be an exponentially contracting Markov chain on a metric space $\X$ as in Assumption \ref{assric}, with $D=1$ and $\kappa <1$. Suppose that both the initial distribution and the transition kernels $P(x, \cdot)$ satisfy $T_1(C)$ for all $x \in \X$,
	and let $S \subseteq \operatorname{Lip}_1 ( \X)$
	be an arbitrary function class. Then we have for all $t \geq 0$ and $n \in \N$ that
	\begin{equation*}
	\prob( d_S (\mu, \mu_n) \geq \E[ d_S (\mu, \mu_n)]+t) \leq \exp \left( -\tfrac{1}{2C } n t^2 (1-\kappa)^2\right).
	\end{equation*}
\end{theorem}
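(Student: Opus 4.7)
The plan is to combine the standard martingale-difference decomposition with the Bobkov--Götze characterisation of $T_1(C)$: a measure $\nu$ on $\X$ satisfies $T_1(C)$ if and only if $\int e^{\lambda(f-\nu(f))}\,d\nu \le e^{\lambda^2 C/2}$ for every $\lambda \in \R$ and every $1$-Lipschitz $f$. View $F_n := W_1(\mu,\mu_n)$ as $\varphi(X_1,\dots,X_n)$, where $\varphi(x_1,\dots,x_n) := W_1(\mu, \tfrac{1}{n}\sum_j \delta_{x_j})$. Since replacing one $x_i$ by $x_i'$ changes $\mu_n$ by transporting mass $\tfrac{1}{n}$ from $x_i$ to $x_i'$, we have $|\varphi(\dots,x_i,\dots) - \varphi(\dots,x_i',\dots)| \le \tfrac{1}{n}d(x_i,x_i')$, i.e.\ $\varphi$ is $\tfrac{1}{n}$-Lipschitz in each coordinate. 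Setting $\mathcal{F}_i := \sigma(X_0,\dots,X_i)$ (with $\mathcal{F}_{-1}$ trivial), decompose
\begin{equation*}
F_n - \E F_n \;=\; \sum_{i=0}^n D_i, \qquad D_i := \E[F_n\mid \mathcal{F}_i] - \E[F_n\mid \mathcal{F}_{i-1}].
\end{equation*}

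The crucial step is to show that $g_i(x) := \E[F_n \mid \mathcal{F}_{i-1}, X_i = x]$ is Lipschitz on $\X$ with constant $L_i \le \tfrac{1}{n(1-\kappa)}$ (and similarly for $g_0(x) := \E[F_n \mid X_0 = x]$). Given $x,x' \in \X$, iterate the optimal one-step couplings guaranteed by \cref{assric} to build a simultaneous coupling of the two continuations $(X_j^x)_{j \ge i}$ and $(X_j^{x'})_{j \ge i}$ satisfying $\E[d(X_j^x, X_j^{x'})] \le \kappa^{j-i} d(x,x')$. Combining this with the coordinatewise Lipschitz property of $\varphi$ gives
\begin{equation*}
|g_i(x) - g_i(x')| \;\le\; \tfrac{1}{n}\sum_{j=i}^n \kappa^{j-i} d(x,x') \;\le\; \tfrac{1}{n(1-\kappa)} d(x,x').
\end{equation*}

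Conditionally on $\mathcal{F}_{i-1}$, the law of $X_i$ equals $P(X_{i-1},\cdot)$, which satisfies $T_1(C)$ by hypothesis. Bobkov--Götze applied to the $L_i$-Lipschitz function $g_i$ gives $\E[e^{\lambda D_i}\mid \mathcal{F}_{i-1}] \le \exp(\tfrac{1}{2}\lambda^2 C L_i^2)$; the $i=0$ term is treated identically using $T_1(C)$ for the initial law. Iterating conditional expectations via the tower property yields
\begin{equation*}
\E\!\left[e^{\lambda (F_n - \E F_n)}\right] \;\le\; \exp\!\left(\tfrac{\lambda^2 C}{2}\sum_{i=0}^n L_i^2\right) \;\le\; \exp\!\left(\tfrac{\lambda^2 C}{2 n (1-\kappa)^2}\right),
\end{equation*}
and optimising in $\lambda$ in Markov's inequality produces the stated sub-Gaussian bound. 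The main obstacle is the Lipschitz estimate for $g_i$: \cref{assric} only controls $W_1$ of the marginal laws, so one must construct a coherent coupling of the \emph{entire trajectories} in which the geometric factors $\kappa^{j-i}$ survive the expectation inside $\varphi$; this is exactly what iterating the optimal one-step couplings achieves, and it is where the $1/(1-\kappa)$ factor enters.
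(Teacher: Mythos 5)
Your strategy is essentially the paper's with the black box opened up: the paper invokes Blower--Bolley \cite[Theorem 1.1]{Blower.2006} to conclude that $\mathrm{Law}(X_1,\dots,X_n)$ satisfies $T_1(C_n)$ on $(\X^n,d_n^{(1)})$, and then applies \cref{conct1} to the $\frac1n$-Lipschitz map $(x_1,\dots,x_n)\mapsto W_1(\mu,\mu_n)$. Your martingale decomposition together with the conditional Bobkov--G\"otze exponential bound is precisely the standard proof of the Blower--Bolley tensorisation, so the route is the same in substance; the self-contained version is a reasonable thing to present, and your iterated coupling correctly yields the Lipschitz constant $L_i\leq \frac1n\sum_{j\geq i}\kappa^{j-i}$ of the conditional expectations (this step does need $D=1$, as you note).

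There is, however, a small but genuine arithmetic gap in your final display: you sum $n+1$ martingale increments, $i=0,\dots,n$, each satisfying $L_i\leq \tfrac{1}{n(1-\kappa)}$, so you only get
\begin{equation*}
\sum_{i=0}^n L_i^2 \;\leq\; \frac{n+1}{\,n^2(1-\kappa)^2\,},
\end{equation*}
not $\frac{1}{n(1-\kappa)^2}$. Even using the sharper $L_0\leq\frac{\kappa}{n(1-\kappa)}$ (since $W_1(\mu,\mu_n)$ does not depend on $X_0$ directly) the sum is $\geq \frac{1}{n(1-\kappa)^2}$ with strict inequality whenever $\kappa>0$, so the stated exponent $\frac{1}{2C}nt^2(1-\kappa)^2$ is not recovered as written. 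The paper sidesteps this by applying Blower--Bolley to the $n$-tuple $(X_1,\dots,X_n)$ directly, implicitly treating $\mathrm{Law}(X_1)$ as the ``initial distribution'' satisfying $T_1(C)$ --- this drops the extra $X_0$-increment and produces the paper's $C_n=C\sum_{m=1}^n\bigl(\sum_{k=0}^{m-1}\kappa^k\bigr)^2\leq Cn/(1-\kappa)^2$, exactly $n$ terms. To match the paper's constant you should either start the martingale filtration at $\mathcal{F}_1=\sigma(X_1)$ and assume the $T_1(C)$ hypothesis refers to $\mathrm{Law}(X_1)$, or accept the $(1+1/n)$ degradation. Separately, you should say a word on why optimal one-step couplings exist (on a Polish space the $W_1$-infimum is attained) or pass through $\varepsilon$-optimal couplings and send $\varepsilon\to 0$.
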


This is the same sub-Gaussian rate of concentration as in the bounded case.

\section{Proofs}
\subsection{Upper bound for the expectation}
Suppose that the assumptions 1 and 2 are satisfied.
 We first prove some auxiliary statements.
The first lemma we use is adapted from \cite{Kloeckner.2018}, where it is only formulated for Markov chains on compact sets. 
\begin{lemma} \label{hold1}
   Let $\alpha \in (0,1]$ and $f \colon  \R^d \to \mathbb{C}$ be a bounded, $\alpha$-Hölder continuous function. Then for all $m,n \in \N$ one has
   \begin{equation*}
       |\E f(X_n)- \mu (f) | \leq 2D^\alpha \kappa^{\alpha n} \operatorname{Hol}_\alpha(f)
   \end{equation*}
   and
   \begin{equation*}
       |\operatorname{Cov}(f(X_n), f(X_m))| \leq 8D^{\alpha} \kappa^{\alpha |m-n|} \|f\|_\infty \operatorname{Hol}_\alpha(f),
   \end{equation*}
   where $\operatorname{Hol}_\alpha(f)$ denotes the $\alpha$-Hölder constant of $f$.
\end{lemma}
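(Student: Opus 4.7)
The natural starting point is to show that the averaging operator $P^n$ preserves $\alpha$-Hölder continuity while contracting the Hölder seminorm geometrically. Specifically, for any $x, y \in \R^d$ and any coupling $\pi$ of $P^n(x, \cdot)$ and $P^n(y, \cdot)$,
\begin{equation*}
  |P^n f(x) - P^n f(y)| \leq \operatorname{Hol}_\alpha(f) \int |u-v|^\alpha \, \dx \pi(u,v).
\end{equation*}
Choosing $\pi$ to be the $W_1$-optimal coupling and applying Jensen's inequality (concavity of $t \mapsto t^\alpha$) gives
\begin{equation*}
  |P^n f(x) - P^n f(y)| \leq \operatorname{Hol}_\alpha(f) \bigl(W_1(P^n(x,\cdot), P^n(y,\cdot))\bigr)^\alpha \leq D^\alpha \kappa^{\alpha n} \operatorname{Hol}_\alpha(f) |x-y|^\alpha
\end{equation*}
by \cref{assric}. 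Thus $L_n := D^\alpha \kappa^{\alpha n} \operatorname{Hol}_\alpha(f)$ is a Hölder constant of $P^n f$.

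For the first inequality, I exploit invariance: $\mu f = \int P^n f \, \dx \mu$ and $\E f(X_n) = \int P^n f \, \dx \gamma_0$. Since constants can be subtracted from $P^n f$ without changing the difference of integrals, I center at $0$:
\begin{equation*}
  |\E f(X_n) - \mu f| = \left| \int (P^n f(x) - P^n f(0)) \, \dx(\gamma_0 - \mu)(x) \right| \leq L_n \left( \int |x|^\alpha \, \dx \gamma_0 + \int |x|^\alpha \, \dx \mu \right).
\end{equation*}
Since $\alpha \leq 1 \leq q$, Jensen's inequality applied to $\cref{assmom}$ (and the fact that $\mu$ also has $q$-th moment $\leq M$) together with the normalization $M = 1$ yields $\int |x|^\alpha \, \dx \gamma_0 \leq 1$ and $\int |x|^\alpha \, \dx \mu \leq 1$. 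This gives the factor $2$ in the claimed bound.

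For the covariance estimate, assume without loss of generality that $m \leq n$. Conditioning on $X_m$ via the Markov property gives $\E[f(X_n) f(X_m)] = \E[f(X_m) g(X_m)]$ with $g := P^{n-m} f$, and likewise $\E f(X_n) = \E g(X_m)$, so
\begin{equation*}
  \operatorname{Cov}(f(X_n), f(X_m)) = \operatorname{Cov}(f(X_m), g(X_m)) = \E\bigl[f(X_m)(g(X_m) - \E g(X_m))\bigr].
\end{equation*}
I then bound the right-hand side by $\|f\|_\infty \cdot \E|g(X_m) - \E g(X_m)|$, and for any constant $c$ the standard inequality $\E|Y - \E Y| \leq 2 \E|Y - c|$ lets me re-center at $c = g(0)$. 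Using the Hölder constant $L_{n-m}$ established above and the same moment bound $\E|X_m|^\alpha \leq 1$ completes the argument, yielding a constant-times-$\|f\|_\infty \operatorname{Hol}_\alpha(f) D^\alpha \kappa^{\alpha|m-n|}$ bound.

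The main obstacle is that, unlike Kloeckner's compact setting, one cannot globally bound $|x-y|^\alpha$ by a diameter. The workaround is the moment condition combined with the normalization $M=1$: Jensen's inequality lets low-order moments inherit the bound from the $q$-th moment, which substitutes for the diameter at every step where one pairs Hölder continuity with a non-compactly-supported measure.
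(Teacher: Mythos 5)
Your proposal is correct, and the first half (contraction of the $\alpha$-Hölder seminorm under $P^n$ and the bound $|\E f(X_n) - \mu f| \le 2L_n$ via the moment condition) is identical in substance to the paper, just phrased on the primal side: you show directly that $P^n f$ is $\alpha$-Hölder with constant $L_n = D^\alpha\kappa^{\alpha n}\operatorname{Hol}_\alpha(f)$ via the optimal $W_1$-coupling and Jensen, whereas the paper introduces the Wasserstein distance $W_\alpha$ for the metric $|x-y|^\alpha$, proves its contraction under $P^n$, and then invokes Kantorovich duality. Where you genuinely diverge is the covariance estimate. The paper translates $f$ so that $\mu(f)=0$ (costing a factor $2$ on $\|f\|_\infty$), splits $\operatorname{Cov}(f(X_n),f(X_m)) = \E[f(X_n)f(X_m)] - \E f(X_n)\E f(X_m)$, and bounds each term separately, giving the stated constant $8$. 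You instead write $\operatorname{Cov}(f(X_n),f(X_m)) = \operatorname{Cov}(f(X_m),g(X_m)) = \E\bigl[f(X_m)\bigl(g(X_m)-\E g(X_m)\bigr)\bigr]$ with $g = P^{n-m}f$, pull out $\|f\|_\infty$, and control the mean absolute deviation via $\E|g(X_m)-\E g(X_m)| \le 2\E|g(X_m)-g(0)| \le 2L_{n-m}\E|X_m|^\alpha \le 2L_{n-m}$. This avoids the translation of $f$ entirely, needs no separate treatment of $\E f(X_n)\E f(X_m)$, and yields the sharper constant $2$ in place of $8$ — which of course still proves the stated lemma. You might note this improvement explicitly; otherwise the argument is complete and correct (including the edge case $m=n$, where $g=f$ and $L_0 = D^\alpha\operatorname{Hol}_\alpha(f)$ still bounds the Hölder constant of $f$ since $D\ge 1$).
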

In the case of a compact state space, any Hölder-continuous function is automatically bounded by a value depending on the Hölder constant. But the compactness assumption can be removed if one assumes additionally that $f$ is bounded and combines this with the moment condition.
\begin{proof}
Let $W_\alpha$ be the $1$-Wasserstein distance with respect to the modified metric $|x-y|^\alpha$. 
We first claim that
\begin{equation}\label{estwalp}
    W_\alpha (\mu_0 P^n, \nu_0 P^n) \leq D^\alpha \kappa^{\alpha n} W_\alpha(\mu_0, \nu_0)
\end{equation}
for any $n \in \N$ and probability measures $\mu_0, \nu_0 \in \cP_1$. Indeed, if $\mu_0=\delta_x$ and $\nu_0 = \delta_y$ are Dirac measures then by Jensen's inequality,
\begin{align*}
    W_\alpha(\delta_x P^n, \delta_y P^n) &\leq (W_1 (\delta_x P^n, \delta_y P^n) )^\alpha \leq D^\alpha \kappa^{\alpha n} W_1(\delta_x, \delta_y)^\alpha \\ &=  D^\alpha \kappa^{\alpha n} |x-y|^\alpha =  D^\alpha \kappa^{\alpha n} W_\alpha(\delta_x, \delta_y),
\end{align*}
using the assumption that $P$ is a contraction in $W_1$. Then we can use the same argument as in the proof of \cite[Proposition 20]{Ollivier.2009} to obtain the estimate \eqref{estwalp} in the general case.\\
To prove the first part of the lemma, we use the dual representation for $W_\alpha$, noting that the Hölder constant $\operatorname{Hol}_\alpha(f)$ is the Lipschitz seminorm of $f$ with respect to the metric $|x-y|^\alpha$. The fact that $\mu$ is the stationary measure together with \eqref{estwalp} now implies
\begin{align*}
       |\E f(X_n)- \mu (f) | &= \left| \gamma_0(P^nf)-\mu(f) \right| = \left|\gamma_0(P^nf)-\mu(P^nf) \right| \\
       &\leq \operatorname{Hol}_\alpha(f) W_\alpha(\gamma_0 P^n, \mu P^n) \leq \operatorname{Hol}_\alpha(f) D^\alpha \kappa^{ \alpha n} (W_\alpha(\delta_0, \mu)+W_\alpha(\delta_0, \gamma_0)) \\
       &\leq 2D^\alpha \kappa^{\alpha n} \operatorname{Hol}_\alpha(f).
\end{align*}
Here we used that
\begin{equation*}
    W_\alpha(\delta_0, \mu) = \int |x|^\alpha \ \dx \mu( x) \leq \left( \int |x|^q \ \dx \mu(x) \right)^{\nicefrac{\alpha}{q}} \leq   1
\end{equation*}
by the moment condition and Jensen's inequality, applied to the convex function $u \mapsto u^{q/\alpha}$ on $[0, \infty)$. Analogously, this estimate holds for $W_\alpha(\delta_0, \gamma_0)$, since the initial distribution $\gamma_0$ satisfies the same moment condition. This proves the first assertion.\\ For the second one, note first that if the process starts in an arbitrary point $x \in \R^d$ then
\begin{equation*}
    \left| (P^nf)(x)-\mu(f) \right| \leq \operatorname{Hol}_\alpha(f) D^\alpha \kappa^{\alpha n} W_\alpha(\delta_x, \mu) \leq \operatorname{Hol}_\alpha(f) D^\alpha \kappa^{\alpha n}(|x|^\alpha+1),
\end{equation*}
by the triangle inequality and the same argument as before.
After translating $f$ we may assume that $\mu (f) = 0$. In particular $f$ then takes positive and negative values, so its $L^\infty$-norm increases by a factor of at most $2$. We also assume that $n \geq m$ and write $n=m+t$. Clearly $|f(X_m)| \leq \| f \|_\infty$, and by the first part we get
\begin{equation*}
    |(\E f(X_n))(\E f(X_m))| \leq 2\| f \|_\infty  D^\alpha \kappa^{\alpha n}  \operatorname{Hol}_\alpha(f).
\end{equation*}
For the term $\E[f(X_n)f(X_m)]$, we invoke the Markov property to obtain
\begin{align*}
    |\E [f(X_n)f(X_m)]| &=  |\E \left[ f(X_m) \E[f(X_{m+t}) \mid X_m] \right]| = |\E \left[ f(X_m) (P^tf)(X_m) \right]|\\
    &\leq \operatorname{Hol}_\alpha(f) D^\alpha \kappa^{\alpha t} \left(  \E |f(X_m)| + \E[|f(X_m)| \cdot |X_m|^\alpha] \right) \\
    &\leq 2\| f \|_\infty  D^\alpha \kappa^{\alpha t}  \operatorname{Hol}_\alpha(f),
\end{align*}
where we used again that $\E |X_m|^\alpha \leq 1$ by the moment condition. Combining the two estimates yields the result (the additional factor of $2$ comes from translating $f$).
\end{proof}
Since we want to use a Fourier approximation, we need estimates in terms of the basis functions $e_k(x) = \exp(\pi ik \cdot x)$ for $k \in \Z^d$. As the Lipschitz constant of $e_k$ grows too rapidly as $\| k \|_\infty \to \infty$, we instead apply \cref{hold1} to obtain bounds in terms of the $\alpha$-Hölder constant of $e_k$, for some parameter $\alpha $ to be specified later. The Hölder constant can be bounded as follows.
\begin{lemma}[{\cite[Lemma 4.2]{Kloeckner.2018}}] \label{holek}
   For $\alpha \in (0, 1]$, $k \in \Z^d$ the $\alpha$-Hölder constant of $e_k$ satisfies
   \begin{equation*}
       \operatorname{Hol}_\alpha (e_k) \leq 2^{1-\alpha} \pi^\alpha d^{\alpha/2} \|k\|_\infty^\alpha.
   \end{equation*}
\end{lemma}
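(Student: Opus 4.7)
The plan is to reduce the estimate to the elementary trigonometric inequality $|1-e^{i\theta}| \leq \min(|\theta|, 2)$ and then interpolate between the Lipschitz bound and the uniform bound.

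First I would write $|e_k(x) - e_k(y)| = |1 - \exp(\pi i k \cdot (y-x))|$, which gives
\begin{equation*}
   |e_k(x) - e_k(y)| \leq \min\bigl( \pi |k \cdot (x-y)|, 2 \bigr).
\end{equation*}
Cauchy--Schwarz together with $\|k\|_2 \leq \sqrt{d}\, \|k\|_\infty$ yields $|k \cdot (x-y)| \leq \sqrt{d}\, \|k\|_\infty |x-y|$, so setting $a := \pi \sqrt{d}\, \|k\|_\infty |x-y|$ we have $|e_k(x) - e_k(y)| \leq \min(a, 2)$.

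The remaining step is the interpolation lemma: for any $a \geq 0$ and $\alpha \in (0,1]$,
\begin{equation*}
    \min(a, 2) \leq 2^{1-\alpha} a^\alpha.
\end{equation*}
Indeed, if $a \leq 2$ this rearranges to $a^{1-\alpha} \leq 2^{1-\alpha}$, which is clear; if $a > 2$ it rearranges to $2^\alpha \leq a^\alpha$, which is also clear. Applying this with our value of $a$ gives
\begin{equation*}
    |e_k(x) - e_k(y)| \leq 2^{1-\alpha} \bigl( \pi \sqrt{d}\, \|k\|_\infty \bigr)^\alpha |x-y|^\alpha = 2^{1-\alpha} \pi^\alpha d^{\alpha/2} \|k\|_\infty^\alpha \cdot |x-y|^\alpha,
\end{equation*}
which is exactly the desired bound on the $\alpha$-Hölder constant.

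There is really no hard step here; the only subtlety is choosing the right two-sided bound $\min(a,2)$ so that the interpolation captures both the short-range behavior (where the derivative estimate of $e_k$ is sharp) and the long-range behavior (where $e_k$ is uniformly bounded by $1$ in modulus so that $|e_k(x)-e_k(y)| \leq 2$).
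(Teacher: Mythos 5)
Your proof is correct and follows essentially the same route as the paper: bound $|e_k(x)-e_k(y)|$ by the minimum of the Lipschitz estimate $\pi\sqrt{d}\,\|k\|_\infty|x-y|$ and the uniform bound $2$, then interpolate via the case distinction on $|x-y|$. The only cosmetic difference is that you derive the Lipschitz bound from $|1-e^{i\theta}|\le|\theta|$ rather than from $|\nabla e_k| = \pi|k|$, and you package the case split as the clean inequality $\min(a,2)\le 2^{1-\alpha}a^\alpha$.
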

\begin{proof}
Since $| \nabla e_k| = \pi |k|$, the function $e_k$ is Lipschitz with constant $\operatorname{Lip}(e_k) \leq \pi |k| \leq \pi \sqrt{d} \|k\|_\infty$. On the other hand, one has $\|e_k\|_\infty = 1$, and thus we obtain for $x \not= y$
\begin{equation*}
    \frac{|e_k(x)-e_k(y)|}{|x-y|^\alpha} \leq \min \left( \frac{2}{|x-y|^\alpha},  \pi \sqrt{d} \cdot \| k \|_\infty  |x-y|^{1-\alpha}\right).
\end{equation*}
If $|x-y| \leq 2(\pi \sqrt{d} \| k \|_\infty )^{-1}$, then the second term does not exceed $2^{1-\alpha} \pi^\alpha d^{\alpha/2} \|k\|_\infty^\alpha$, and otherwise the first term is not larger than this bound. The claim follows.
\end{proof}
These two lemmas enable us to prove the following useful estimate for $|\mu_n(e_k)-\mu(e_k)|$.
\begin{lemma} \label{emunek}
   For all $\alpha \in (0, 1]$, $k \in \Z^d$, $n \geq (1-\kappa^\alpha)^{-1}$ one has
   \begin{equation*}
       \E |\mu_n(e_k)-\mu(e_k)|^2 \lesssim \frac{\|k\|_\infty^{2\alpha}}{n(1-\kappa^\alpha)}.
   \end{equation*}
\end{lemma}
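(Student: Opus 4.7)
The plan is to write $\mu_n(e_k) - \mu(e_k) = \frac{1}{n}\sum_{i=1}^n(e_k(X_i) - \mu(e_k))$ and split this into a (deterministic) bias term and a centered term:
\begin{equation*}
\mu_n(e_k) - \mu(e_k) = \underbrace{\frac{1}{n}\sum_{i=1}^n \bigl(\E e_k(X_i) - \mu(e_k)\bigr)}_{\text{bias}} + \underbrace{\frac{1}{n}\sum_{i=1}^n \bigl(e_k(X_i) - \E e_k(X_i)\bigr)}_{\text{fluctuation}}.
\end{equation*}
Then $|\mu_n(e_k)-\mu(e_k)|^2 \leq 2(\text{bias})^2 + 2|\text{fluctuation}|^2$, and I bound each piece separately. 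Since $e_k$ is complex, I will either work with real/imaginary parts or use the complex covariance $\E[Z\bar W]-\E Z\,\E\bar W$; the Hölder and boundedness estimates from \cref{hold1,holek} apply componentwise, so no new ideas are needed here.

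For the bias, the first assertion of \cref{hold1} with $f = e_k$ gives $|\E e_k(X_i) - \mu(e_k)| \leq 2 D^\alpha \kappa^{\alpha i} \operatorname{Hol}_\alpha(e_k)$, and summing the geometric series yields
\begin{equation*}
\left| \frac{1}{n}\sum_{i=1}^n \bigl(\E e_k(X_i) - \mu(e_k)\bigr) \right| \lesssim \frac{\operatorname{Hol}_\alpha(e_k)}{n(1-\kappa^\alpha)}.
\end{equation*}
For the fluctuation term, I expand the modulus squared as a double sum of covariances and apply the second part of \cref{hold1} together with $\|e_k\|_\infty = 1$:
\begin{equation*}
\E|\text{fluctuation}|^2 = \frac{1}{n^2}\sum_{i,j=1}^n \operatorname{Cov}(e_k(X_i), e_k(X_j)) \lesssim \frac{D^\alpha \operatorname{Hol}_\alpha(e_k)}{n^2}\sum_{i,j=1}^n \kappa^{\alpha|i-j|}.
\end{equation*}
The double geometric sum is $O\bigl(n/(1-\kappa^\alpha)\bigr)$, producing the bound $\operatorname{Hol}_\alpha(e_k)/[n(1-\kappa^\alpha)]$.

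To combine, I insert the Hölder estimate $\operatorname{Hol}_\alpha(e_k) \lesssim \|k\|_\infty^\alpha$ from \cref{holek} (the case $k=0$ is trivial since then the left-hand side vanishes, so I may assume $\|k\|_\infty \geq 1$). This gives a fluctuation contribution of order $\|k\|_\infty^\alpha / [n(1-\kappa^\alpha)]$, which is already smaller than the stated bound since $\|k\|_\infty^\alpha \leq \|k\|_\infty^{2\alpha}$, and a squared bias contribution of order $\|k\|_\infty^{2\alpha}/[n(1-\kappa^\alpha)]^2$. The hypothesis $n \geq (1-\kappa^\alpha)^{-1}$ ensures $1/[n(1-\kappa^\alpha)] \leq 1$, so the squared bias is at most $\|k\|_\infty^{2\alpha}/[n(1-\kappa^\alpha)]$, matching the target. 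I do not anticipate a serious obstacle; the only slightly delicate point is tracking the complex-valued covariance correctly and using the lower bound on $n$ precisely to absorb the $(1-\kappa^\alpha)^{-2}$ coming from the bias into the claimed $(1-\kappa^\alpha)^{-1}$.
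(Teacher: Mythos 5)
Your proposal matches the paper's argument essentially step for step: the same bias--fluctuation split, the same use of \cref{hold1} with \cref{holek} for both pieces, the same geometric-series sums giving $\|k\|_\infty^{2\alpha}/[n(1-\kappa^\alpha)]^2$ for the squared bias and $\|k\|_\infty^\alpha/[n(1-\kappa^\alpha)]$ for the fluctuation, and the same use of $n\geq(1-\kappa^\alpha)^{-1}$ to absorb one factor of $(1-\kappa^\alpha)^{-1}$. The only cosmetic difference is that you invoke $|a+b|^2\le 2|a|^2+2|b|^2$ where the paper uses the exact bias--variance identity, which only costs a constant swallowed by $\lesssim$.
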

\begin{proof}
Note that
\begin{align*}
       \E |\mu_n(e_k)-\mu(e_k)|^2 &= \E \Bigl[ \left( \tfrac{1}{n} \textstyle{\sum_{j=1}^n} e_k(X_j) - \mu(e_k) \right)^2 \Bigr] \\
       &= \tfrac{1}{n^2} \left(\textstyle{\sum_{j=1}^n}  \bigl(\E [e_k(X_j)]-\mu(e_k)\bigr)  \right)^2 + \tfrac{1}{n^2} \textstyle{\sum_{1 \leq j,l \leq n}} \operatorname{Cov}(e_k(X_j),e_k(X_l))
\end{align*}
Using \cref{hold1} and \cref{holek}, we obtain 
\begin{align*}
\frac{1}{n^2} \sum_{1 \leq j,l \leq n} \left|\operatorname{Cov}(e_k(X_j),e_k(X_l))\right| \leq \frac{16D^\alpha  \pi^\alpha d^{\alpha/2} \|k\|_\infty^\alpha}{n^2} \sum_{1 \leq j,l \leq n} \kappa^{\alpha |j-l|} \lesssim \frac{  \|k\|_\infty^\alpha }{n(1-\kappa^\alpha)}.
\end{align*}
Similarly, for the second term we get
\begin{align*}
    \frac{1}{n^2} \left( \textstyle{\sum_{j=1}^n}  \bigl(\E e_k(X_j)-\mu(e_k)\bigr)  \right)^2 \lesssim \frac{  \|k\|_\infty^{2\alpha}} {n^2}\left( \textstyle{\sum_{j=1}^n} \kappa^{\alpha j} \right)^2 \leq   \|k\|_\infty^{2\alpha} \frac{\kappa^{2\alpha}}{n^2(1-\kappa^\alpha)^2} 
    \leq \frac{  \| k \|_\infty^{2\alpha}}{n(1-\kappa^\alpha)},
\end{align*}
where we used the assumption on $n$ in the last step. The proof is complete.
\end{proof}
Now take an arbitrary function $f \in C_1^s (\R^d)$. To estimate $|\mu_n(f)-\mu(f)|$, we can assume $f(0)=0$. For some radius $R> 1 $ to be determined later, let $K:=[-R,R]^d$. The idea will be to approximate $f$ on $K$ by its Fourier series $g$, and to use the moment condition to obtain bounds for the integral over $\R^d \smallsetminus K = K^c$.
More precisely, we will use the following lemma, which is a direct consequence of the triangle inequality.
\begin{lemma}\label{eztri}
If $g \colon  \R^d \to \mathbb{C}$ is any bounded measurable function, then
\begin{equation*}
    |\mu_n(f)-\mu(f)| \leq 2\|f-g\|_{L^\infty(K)} + |\mu_n(g)-\mu(g)| + \int_{K^c} (|f|+|g|) \ \dx \mu_n +  \int_{K^c} (|f|+|g|) \ \dx \mu.
\end{equation*}
\end{lemma}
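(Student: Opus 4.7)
The plan is to apply the triangle inequality three times and then split each of the resulting $L^1$-differences into integrals over $K$ and $K^c$, where on $K$ the sup-norm bound $\|f-g\|_{L^\infty(K)}$ applies, and on $K^c$ we can estimate $|f-g| \leq |f| + |g|$ pointwise.

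Concretely, I would first insert $g$ into the difference:
\begin{equation*}
    |\mu_n(f) - \mu(f)| \leq |\mu_n(f) - \mu_n(g)| + |\mu_n(g) - \mu(g)| + |\mu(g) - \mu(f)|.
\end{equation*}
The middle term already appears in the desired bound. For the first term I would write
\begin{equation*}
    |\mu_n(f) - \mu_n(g)| \leq \int_{\R^d} |f-g| \, \dx \mu_n = \int_K |f-g| \, \dx \mu_n + \int_{K^c} |f-g| \, \dx \mu_n,
\end{equation*}
bound the integral over $K$ by $\|f-g\|_{L^\infty(K)} \mu_n(K) \leq \|f-g\|_{L^\infty(K)}$, and bound the integrand over $K^c$ pointwise by $|f| + |g|$. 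An identical treatment of $|\mu(g) - \mu(f)|$ with $\mu_n$ replaced by $\mu$ produces a second copy of $\|f-g\|_{L^\infty(K)}$ together with $\int_{K^c}(|f|+|g|)\,\dx \mu$. Summing the two contributions yields exactly the stated inequality.

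There is essentially no obstacle here: the statement is a book-keeping exercise combining the triangle inequality with the trivial observation that $|f - g|$ can be controlled separately on $K$ (by the sup-norm of the approximation error) and on $K^c$ (where one gives up on approximation and simply bounds each function by its absolute value). The only small point worth verifying is that $\mu_n(K) \leq 1$ and $\mu(K) \leq 1$, so no extra factors sneak in. No ingredients beyond elementary measure theory are required.
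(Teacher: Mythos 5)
Your proof is correct and matches exactly what the paper intends: the paper describes the lemma as ``a direct consequence of the triangle inequality'' and supplies no further detail, and your inserting $g$, splitting each of the two resulting $L^1$-differences over $K$ and $K^c$, and bounding by $\|f-g\|_{L^\infty(K)}$ on $K$ and by $|f|+|g|$ on $K^c$ is precisely that elementary argument. Nothing is missing; the only implicit hypothesis (that $f$ is integrable against $\mu$ and $\mu_n$, so the splits are legitimate) is supplied by the surrounding context where $f$ is $1$-Lipschitz with $f(0)=0$ and the moment condition holds.
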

Let $\phi \in C^\infty ( \R ^d )$ be such that $\operatorname{supp} ( \phi ) \subseteq [- 2 R , 2 R ] ^d $ and $\phi | _K \equiv 1$. As long as $R > 1$, $\phi$ can be chosen in such a way that all its derivatives are bounded by a constant depending only on $d$.
Then the function $\tilde{f} := f \phi \colon  [-2R,2R]^d \to \mathbb{C}$ has periodic boundary conditions and satisfies $f |_K = \tilde{f}|_K$. Furthermore, $
\| \tilde{f} \|_{C^r_1 } \lesssim\| \tilde{f} \|_{C^s_1 } \lesssim 1$.
By scaling, the function $g \colon  [-1,1]^d \to \R$ defined by $g(x)=\tilde{f}(2Rx)$ has periodic boundary conditions and satisfies $\| g \|_{C^r _ 1 } \lesssim R^r $. 
Let $\mathcal{F}^g(x) = \sum_{k \in \Z ^d} \hat{g_k} e^{\pi ik \cdot x}$
be the Fourier series of $g$, and for given $J \in \N$ let
\begin{equation*}
    \mathcal{F}_J^g(x) = \sum_{k \in \Z ^d, \|k \|_\infty \leq J} \hat{g_k} e^{\pi ik \cdot x}
\end{equation*}
the approximation of order $J$. Then Theorem 4.4 in Schultz~\cite{Schultz.1969} implies for all $J \ge 2$ that
\begin{equation*} 
    \|   \mathcal{F}_J^g - g \|_{L^\infty([-1,1]^d)} \leq C_d R ^r \frac{(\log J)^d}{J ^r},
\end{equation*}
where $C_d \leq C d2^d$ for an absolute constant $C$. Hence if we define $\mathcal{F}_J^f(x) := \mathcal{F}_J^g (x/2R)$, then 
\begin{equation*}
     \| \mathcal{F}_J^f - f \|_{L^\infty([-R,R]^d)} \leq \|   \mathcal{F}_J^f - \tilde{f} \|_{L^\infty([-2R,2R]^d)} \lesssim R ^r \frac{(\log J)^d}{J ^r}.
\end{equation*}
Now we apply \cref{eztri} for $g= \mathcal{F}_J^f$. To estimate the integrals over $K^c$, we will use the moment condition and Orlicz-Hölder's inequality (cf.~\cite{ONeil.1965}).
Let $\Phi^* \colon [0, \infty ) \to [0, \infty )$ denote the Young conjugate of $\Phi$, defined for $x \in [0 , \infty )$ by 
\begin{equation*}
	\Phi ^* ( x ) = \sup\nolimits_{y \ge 0 } ( x y - \Phi ( y ) ).
\end{equation*}
Since $f$ is $1$-Lipschitz with $f(0)=0$, we have $|f(x)| \leq |x|$ for any $ x \in \R^d$ and therefore, by Orlicz-Hölder's inequality and Assumption \ref{assmom},
\begin{align*}
\int_{K^c} |f| \ \dx \mu 
&\leq \int_{K^c} |x| \ \dx \mu
= \int_{\R^d } | x | \mathbf{1}_{K^c } ( x ) \, \dx \mu
\\ &
\le 2 \left( \inf \left\{ \theta > 0 \colon \int_{\R^d } \Phi ( \theta^{-1} | x | ) \, \dx \mu \le 1 \right\} \right)
\left( \inf \left\{ \theta > 0 \colon \int_{\R^d } \Phi^* ( \theta^{-1} \mathbf{1}_{K^c} ) \, \dx \mu \le 1 \right\} \right)
\\ & \lesssim \frac{1}{ (\Phi^* ) ^{-1} ( (\mu ( K^c ) )^{-1 } ) }.
\end{align*}
Here we used that, for $\theta > 0$,
one has $
\int_{\R^d } \Phi^* ( \theta^{-1} \mathbf{1}_{K^c} ) \, \dx \mu
= \mu ( K^c ) \Phi^* ( \theta^{-1})$
and thus the second factor equals $\frac{1}{ (\Phi^* ) ^{-1} ( (\mu ( K^c ) )^{-1 } ) }$ by monotonicity of $\Phi^*$.
On the other hand, since $|x| \geq R$ for $x \in K^c$ the Markov inequality gives $\mu(K^c) \leq (\Phi ( R ) ) ^{-1} $. Thus we obtain
\begin{equation*}
\int_{K^c} |f| \ \dx \mu \lesssim \frac{1}{ (\Phi^*)^{-1} (  \Phi ( R ) ) }.
\end{equation*}
Analogously, for the measure $\mu_n$ we have
\begin{equation*}
	\int_{K^c} |f| \ \dx \mu_n \leq \int_{K^c} |x| \ \dx \mu_n.
\end{equation*}
To estimate the tail integrals for $\mathcal{F}_J^f$, note that by periodicity one has
\begin{align*}
\sup_{x \in K^c} |\mathcal{F}_J^f(x)| = \| \mathcal{F}_J^f \|_{L^\infty([-2R,2R]^d)} \lesssim R^r  \frac{(\log J)^d}{ J ^r } + \|\tilde{f} \|_{L^\infty([-2R,2R]^d)} \lesssim R^r  \frac{(\log J)^d}{J^r } + R ,
\end{align*}
since $\tilde{f}$ is by assumption $1$-Lipschitz. This yields
\begin{equation*}
\int_{K^c} |\mathcal{F}_J^f| \ \dx \mu
\lesssim \mu(K^c) \left( R^r  \frac{(\log J)^d}{J ^r } + R \right) 
\leq    R^r  \frac{(\log J)^d}{J ^r } + R \mu(K^c) 
\leq R^r  \frac{(\log J)^d}{J ^r } +  \frac{R}{\Phi(R)}
.
\end{equation*}
Analogously, we obtain for $\int_{K^c} |\mathcal{F}_J^f| \ \dx \mu_n$ the upper bound
\begin{equation*}
\int_{K^c} |\mathcal{F}_J^f| \ \dx \mu_n \lesssim \left( R ^r  \frac{(\log J)^d}{ J ^r } + R \right) \mu_n(K^c)
\le R^r  \frac{(\log J)^d}{J ^r } + R \mu_n ( K^c ).
\end{equation*}
Hence these terms are of the same order as the corresponding expressions for $|f|$.\\
It remains to estimate the term $|\mu(\mathcal{F}_J^f)-\mu_n(\mathcal{F}_J^f)|$ from above. Writing $e_k^R (x) := e^{\frac{\pi i}{2R} k \cdot x}$,
we obtain
\begin{align*}
    |\mu(\mathcal{F}_J^f)-\mu_n(\mathcal{F}_J^f)| &\leq \sum_{0<\| k \|_\infty \leq J} |\hat{g_k}| |\mu(e_k^R)-\mu_n(e_k^R)| \\
    &\leq \left( \sum_{0<\| k \|_\infty \leq J} \|k\|_\infty^{2 r } |\hat{g_k}|^2 \right)^{\nicefrac{1}{2}} \left(\sum_{0<\| k \|_\infty \leq J} \frac{|\mu(e_k^R)-\mu_n(e_k^R)|^2 }{\|k\|_\infty^{2 r } }\right)^{\nicefrac{1}{2}} \\
    &\leq R^r \left(\sum_{0<\| k \|_\infty \leq J} \frac{|\mu(e_k^R)-\mu_n(e_k^R)|^2 }{\|k\|_\infty^2}\right)^{\nicefrac{1}{2}},
\end{align*}
where we used the Cauchy-Schwarz inequality and the factor $R^r $ comes from the $C^r$-norm of $g$. Moreover, the term for $k=0$ corresponds to a constant function and can therefore be ignored.
Combining all the estimates we obtain the following lemma.
\begin{lemma}
	For all $f \in C_1^s (\R^d)$ and all $R > 1$, $J \ge 2$ it holds that
	\begin{align*}
	|\mu_n(f)-\mu(f)| 
	& \lesssim  R^r  \frac{(\log J)^d}{J^r } + R (\mu_n ( K^c ) + (\Phi (  R ) ) ^{-1} ) +
	\frac{1}{ (\Phi^*)^{-1} ( \Phi ( R ) ) }
	+\int_{K^c} |x| \ \dx \mu_n
	\\
	& \quad + R^r  \left(\sum_{0<\| k \|_\infty \leq J} \frac{|\mu(e_k^R)-\mu_n(e_k^R)|^2 }{\|k\|_\infty^{2 r } }\right)^{\nicefrac{1}{2}}.
	\end{align*}
\end{lemma}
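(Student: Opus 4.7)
The plan is to instantiate \cref{eztri} with a carefully chosen periodic Fourier approximation $g = \mathcal{F}_J^f$ of $f$, and then control each of the four resulting terms separately, ultimately just collecting the computations already carried out in the preceding paragraphs.

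First I would reduce to $f(0) = 0$ by translation (which leaves $|\mu_n(f) - \mu(f)|$ invariant), so that the $1$-Lipschitz bound $|f(x)| \leq |x|$ is available globally. Then I would construct a $1$-Lipschitz periodic extension $\tilde f$ of $f|_K$ on the doubled box $[-2R, 2R]^d$, and rescale to a $2R$-Lipschitz periodic function $g(x) = \tilde f(2Rx)$ on $[-1,1]^d$. Taking its truncated Fourier series $\mathcal{F}_J^g$ of order $J$ and setting $\mathcal{F}_J^f(x) := \mathcal{F}_J^g(x/2R)$, the uniform approximation rate from \cite{Schultz.1969} gives
\begin{equation*}
  \|\mathcal{F}_J^f - f\|_{L^\infty(K)} \lesssim R\frac{(\log J)^d}{J},
\end{equation*}
which accounts for the first term on the right-hand side via the $2\|f-g\|_{L^\infty(K)}$ contribution of \cref{eztri}.

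Next I would dispatch the tail pieces produced by \cref{eztri}. For $\int_{K^c} |f|\,\dx\mu$, Hölder's inequality combined with the Markov-type bound $\mu(K^c) \leq R^{-q}$ coming from \cref{assmom} yields the contribution $R^{1-q}$. For $\int_{K^c} |f|\,\dx\mu_n$, using only $|f| \leq |x|$ leaves the data-dependent term $\int_{K^c} |x|\,\dx\mu_n$, which cannot be replaced by a deterministic quantity and is therefore kept as a standalone summand. The tail integrals of $\mathcal{F}_J^f$ are handled via periodicity: $\|\mathcal{F}_J^f\|_{L^\infty(K^c)} = \|\mathcal{F}_J^f\|_{L^\infty([-2R,2R]^d)}$, which by the triangle inequality with $\tilde f$ is $\lesssim R + R(\log J)^d/J$, producing the term $R\mu_n(K^c)$ after integration against $\mu_n$, while integrating against $\mu$ yields another $R^{1-q}$ contribution that is absorbed into the stated bound.

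The main term is $|\mu(\mathcal{F}_J^f) - \mu_n(\mathcal{F}_J^f)|$. Expanding in the rescaled Fourier basis $e_k^R(x) = \exp(\tfrac{\pi i}{2R} k \cdot x)$ and applying Cauchy-Schwarz with weight $\|k\|_\infty^2$ separates the Fourier coefficients from the empirical-error factors, giving
\begin{equation*}
  |\mu(\mathcal{F}_J^f) - \mu_n(\mathcal{F}_J^f)| \leq \Bigl( \sum_{0<\|k\|_\infty \leq J} \|k\|_\infty^{2} |\hat{g}_k|^2\Bigr)^{1/2} \Bigl( \sum_{0<\|k\|_\infty \leq J} \frac{|\mu(e_k^R) - \mu_n(e_k^R)|^2}{\|k\|_\infty^{2}}\Bigr)^{1/2}.
\end{equation*}
The first factor is bounded by $2R$ via Parseval applied to $\nabla g$, exploiting that $g$ is $2R$-Lipschitz so that $\|\nabla g\|_{L^2([-1,1]^d)} \lesssim R$. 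Collecting the five contributions via \cref{eztri} and absorbing dimensional constants into $\lesssim$ yields the claim. I expect no serious obstacle: the argument is a bookkeeping exercise, and the only place one must pay attention is the intentional asymmetry between the $\mu$-tail (closed via the moment condition) and the $\mu_n$-tail (left as a random quantity to be handled when taking expectations in the proof of \cref{thmmarkov}).
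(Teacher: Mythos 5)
Your proposal is correct and matches the paper's proof essentially step for step: the same periodic extension $\tilde f$ and rescaling to $g$, the same invocation of Schultz's $L^\infty$ approximation bound, the same split of the tail integrals (deterministic Markov/Hölder bound for $\mu$, random integral retained for $\mu_n$), the same sup-norm bound for $\mathcal{F}_J^f$ via periodicity, and the same weighted Cauchy--Schwarz with the Lipschitz/Parseval bound for the Fourier-coefficient factor. The only cosmetic difference is that you name Parseval explicitly where the paper simply cites the $2R$-Lipschitz property of $g$.
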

Note that the right-hand side does not depend on $f$ anymore. Hence, the same bound also holds for $\sup_{f \in C_1^s } |\mu_n(f)-\mu(f)| = d_{C_1^s } (\mu_n, \mu)$. In the next step we take the expectation on both sides.
 Note that
\begin{align*}
    \E \left[ \int_{K^c} |x| \ \dx \mu_n\right] = \frac{1}{n} \sum_{i=1}^n \E [|X_i| \mathds{1}_{X_i \in K^c}] \lesssim \frac{1}{ (\Phi^*)^{-1} ( \Phi ( R ) ) },
\end{align*}
since the distribution of each $X_i$ satisfies by assumption the same moment condition as $\mu$. Analogously,
\begin{equation*}
    \E [\mu_n(K^c)] = \frac{1}{n} \sum_{i=1}^n \prob (X_i \in K^c) \lesssim ( \Phi ( R ) ) ^{ - 1 } . 
\end{equation*}
Thus, we get
	\begin{equation}
	\label{eqd1est:weak}
	\begin{split}
		\E [d_{C_1^s ( \R^d ) }(\mu_n, \mu)] 
		&\lesssim R^r  \frac{(\log J)^d}{J ^r } + \frac{R}{\Phi ( R ) } +
		\frac{1}{ (\Phi^*)^{-1} ( \Phi ( R ) ) } \\
		& \quad + R^r \Biggl(\sum_{0< \| k \|_\infty \leq J} \frac{\E |\mu(e_k^R)-\mu_n(e_k^R)|^2 }{\|k\|_\infty^{2 r } }\Biggr)^{\nicefrac{1}{2}}.
	\end{split}
\end{equation}
Next we show that the second term dominates the third one.
\begin{lemma}
	For all $R>0$ we have 
	\begin{equation*}
	 \frac{R}{\Phi (R)} \ge \frac{1}{ (\Phi^*)^{-1} ( \Phi ( R ) )}.
	\end{equation*}
\end{lemma}
\begin{proof}
The claim is equivalent to $(\Phi^*)^{-1} ( \Phi ( R ) ) \ge \frac{\Phi ( R ) }{ R }$.
Thus it suffices to prove that $\Phi ( R ) \ge \Phi ^* \left( \frac{\Phi ( R ) }{R} \right)$.
By definition of $\Phi ^* $, we need to show that $\frac{x \Phi ( R ) }{R} - \Phi ( x ) \le \Phi ( R )$ for every $x \ge 0$. If $x \le R$ this is obviously true. On the other hand, if $x \ge R$ we have $\frac{x \Phi ( R ) }{R} \le \Phi ( x ) $, since $x \mapsto \frac{\Phi ( x ) }{x}$ is increasing. This completes the proof of the lemma.
\end{proof}
Applying this lemma on the right-hand side of \eqref{eqd1est:weak} yields the following.
\begin{corollary}
Under the assumptions of \cref{thmmarkov}, for all $J \in \N$ and $R>0$ one has
	\begin{equation} \label{eqd1est}
\E [d_{C_1^s ( \R^d ) }(\mu_n, \mu)] \lesssim R^r \frac{(\log J)^d}{J ^r } + \frac{R}{\Phi ( R ) }
+ R^r  \Biggl(\sum_{0< \| k \|_\infty \leq J} \frac{\E |\mu(e_k^R)-\mu_n(e_k^R)|^2 }{\|k\|_\infty^{2 r } }\Biggr)^{\nicefrac{1}{2}}.
\end{equation}
\end{corollary}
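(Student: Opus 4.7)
The plan is to deduce the corollary from the preceding lemma by three successive operations: removing the dependence on $f$ via Kantorovich duality, taking expectations, and absorbing the resulting tail terms into the $R^{1-q}$ contribution. First, since the bound in the preceding lemma was derived under the harmless normalization $f(0)=0$ (adding a constant does not change $|\mu_n(f)-\mu(f)|$) and since its right-hand side is independent of $f$, I would take the supremum over $f \in \operatorname{Lip}_1(\R^d)$ on the left and invoke \cref{kantdu} to transfer the estimate to $W_1(\mu_n,\mu)$ itself.

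Next, I would take expectations termwise. The two deterministic contributions $R(\log J)^d/J$ and $R^{1-q}$ pass through unchanged, and the Fourier-type sum remains inside an expectation on the right. For the two random tail terms, the moment assumption and Markov's inequality give
\begin{equation*}
\E[\mu_n(K^c)] = \tfrac{1}{n}\sum_{i=1}^n \prob(|X_i| \geq R) \leq R^{-q},
\end{equation*}
while Hölder's inequality combined with the same bound yields
\begin{equation*}
\E\Bigl[\int_{K^c} |x|\, \dx \mu_n\Bigr] = \tfrac{1}{n}\sum_{i=1}^n \E\bigl[|X_i|\mathds{1}_{|X_i| \geq R}\bigr] \leq R^{1-q}.
\end{equation*}
Thus both $R\,\E[\mu_n(K^c)]$ and $\E\bigl[\int_{K^c}|x|\,\dx \mu_n\bigr]$ are of order $R^{1-q}$ and may be absorbed into the existing term of that form.

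I do not anticipate a real obstacle, since all the analytic work — the Fourier approximation on the cube $K=[-R,R]^d$ via Schultz's theorem and the tail control based on $|f(x)| \leq |x|$ — was already carried out in the paragraphs leading up to the preceding lemma. The corollary is essentially a bookkeeping step that combines Kantorovich duality with the moment assumption \cref{assmom} to convert the $f$-by-$f$ estimate into an expectation bound for $W_1(\mu_n,\mu)$. The only mild point is to confirm that the right-hand side of the preceding lemma is genuinely uniform in $f \in \operatorname{Lip}_1(\R^d)$, which is visible from its explicit form.
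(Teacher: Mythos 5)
Your proposal matches the paper's own proof essentially step for step: take the supremum over $f \in \operatorname{Lip}_1(\R^d)$ of the uniform bound in the preceding lemma and apply \cref{kantdu}, then take expectations and bound $\E[\mu_n(K^c)] \leq R^{-q}$ and $\E[\int_{K^c}|x|\,\dx\mu_n] \leq R^{1-q}$ via the moment condition, absorbing both into the $R^{1-q}$ term. Nothing is missing.
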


To prove the theorem it remains to apply the estimate from Lemma \ref{emunek} and then choose $\alpha$, $R$, and $J$ appropriately. The Hölder constant of $e_k^R$ is the constant for $e_k$ divided by $(2R)^\alpha$, but this factor is at most $1$ for $R>1$, so it can be ignored. For any $ \alpha \in (0,1)$ and $n \ge (1 - \kappa^\alpha)^{-1}$ we thus have
\begin{align*}
\Biggl(\sum_{0< \| k \|_\infty \leq J} \frac{\E |\mu(e_k^R)-\mu_n(e_k^R)|^2 }{\|k\|_\infty^{2 r } }\Biggr)^{\nicefrac{1}{2}}  
\lesssim\left( \sum_{0 < \|k \|_\infty \leq J} \frac{ \| k \|_\infty^{2\alpha}}{n(1- \kappa^\alpha) \| k \|_\infty ^{2 r } }\right) ^{\nicefrac{1}{2}} 
\lesssim \left(\sum_{j=1}^J \frac{j^{d+2\alpha - 2 r -1}}{n(1- \kappa ^\alpha)} \right) ^{\nicefrac{1}{2}},
\end{align*}
using that the number of points $k \in \Z^d$ with $\|k\|_\infty = j$ is of order $j^{d-1}$. Now we choose $\alpha = 1/ \log_2 J$, which gives $j^{2\alpha} \leq J^{2\alpha} =4$ for all $1 \leq j \leq J$. Together with $1-\kappa^\alpha \geq \alpha(1-\kappa)$ this implies
\begin{equation*}
\E \left[ \left(\sum_{\| k \|_\infty \leq J} \frac{|\mu(e_k^R)-\mu_n(e_k^R)|^2 }{\|k\|_\infty^{2 r } }\right)^{\nicefrac{1}{2}} \right]
\lesssim \sqrt{\frac{\log J}{n(1-\kappa)}} \left( \sum_{j=1}^J j^{d - 2 r - 1} \right) ^{\nicefrac{1}{2}}
\lesssim J^{\nicefrac{d}{2}-r} \sqrt{\frac{\log J}{n(1-\kappa)}} ,
\end{equation*}
since $\sum_{j=1}^J j^{d-2 r - 1 } =\mathcal{O}(J^{d-2 r})$ if $d \geq 2 r + 1$. Setting $n'=n(1-\kappa)$, we obtain
for all $n \ge (1 - \kappa^\alpha)^{-1}$
that
\begin{equation} \label{ew1est}
\E[ d_{C_1^s ( \R^d ) } (\mu_n, \mu)] \lesssim R ^r \left( \frac{(\log J)^d}{J ^r } 
+ J^{\nicefrac{d}{2}-r } \sqrt{\frac{\log J}{n'}}\right) 
+ \frac{R}{\Phi ( R ) } .
\end{equation}
It remains to choose the parameters $R$ and $J$ in such a way that the right-hand side of this estimate is as small as possible.
First, the optimal choice of $J$ is given by
$J= \lfloor (\log n')^{2-\nicefrac{1}{d}} (n')^{\nicefrac{1}{d}}\rfloor$.
This value can be obtained using the ansatz $J= (n')^\beta$ and ignoring terms of lower order, which leads to the optimal exponent of $n'$ if $\beta = \frac{1}{d}$. Then the estimate can be refined by setting $J = (n')^{1/d} (\log n')^\gamma$, and the optimal power of $\log n'$ is obtained for $\gamma = 2-\frac{1}{d}$.

As $n' \to \infty$, we get $J \to \infty$ and $\alpha \to 0$. 
It is also easy to check that $n ' \geq (1-\kappa^\alpha)^{-1}$ is satisfied if $n$ is large enough (independently of $R$), and hence the above estimates are indeed valid. We thus obtain for all $R \ge 1$
and $n$ sufficiently large
\begin{equation*}
\E [ d_{C_1^s ( \R^d ) } (\mu_n, \mu)] \lesssim R^r \left( \frac{(\log n')^{d-2 r + \nicefrac{r}{d}}}{(n')^{\nicefrac{r}{d}}}\right) + \frac{R}{\Phi(R) } ,
\end{equation*}
establishing the theorem for $d \geq 2 r + 1$.

Analogously, if $d=2 r $ then $\sum_{j=1}^J j^{d-2r - 1} = \mathcal{O}( \log J)$, and thus we obtain the bound
\begin{equation*}
      \E[ d_{C_1^s ( \R^d ) } (\mu_n, \mu)] \lesssim R ^r \left( \frac{(\log J)^d}{J ^r } 
      +\frac{\log J}{ \sqrt{ n'}}\right) 
\end{equation*}
This time the optimal value for $J$ is $J= \lfloor (n' ) ^{ 1 / ( 2 r ) } ( \log n' ) ^{ (d + 1)/ r } \rfloor$, whence the term in brackets is of order $\log n' / ( n ' ) ^{ 1/2 }$.

 Finally, if $d \le 2 r - 1 $ then $\sum _{j=1}^J j^{d-2 r - 1} = \mathcal{O}(1)$, and thus we have
\begin{equation*}
\E[ d_{C_1^s ( \R^d ) } (\mu_n, \mu)] \lesssim R ^r \left( \frac{(\log J)^d}{J ^r } 
+ \sqrt{\frac{\log J}{n'}} \right) 
\end{equation*}
Here the optimal choice for $J$ is $J= \lfloor (n' ) ^{ 1 / ( 2 r ) } ( \log n' ) ^{2 d  / ( 2 r + 1 ) } \rfloor$. and then the term in brackets is of order $ (n' ) ^{ - 1 / 2 } ( \log n' ) ^{ d / (2 r + 1 ) } $. This completes the proof of \cref{thmmarkov}.

\subsection{Concentration inequalities}
To prove \cref{markcon} we will use standard bounded difference arguments. We first formulate a general result concerning concentration of functions of $n$ random variables. Let $Y_1, \ldots, Y_n$ be random variables taking values in $\X$ and let $f \colon \X^n \to \R$ be a bounded function. For $1 \leq i \leq j \leq n$ denote the vector $(Y_i, Y_{i+1}, \ldots, Y_j) =: Y_i^j$. We will also write $Y=Y_1^n=(Y_1, \ldots, Y_n)$. For given elements $y_i \in \X$ define
\begin{equation*}
    \Delta_k(y_1^k) := \E \left[f(Y) \mid Y_1^k = y_1^k \right] - \E \left[ f(Y) \mid Y_1^{k-1} = y_1^{k-1} \right], \quad 1 \leq k \leq n.
\end{equation*}
That is, $\Delta_k$ denotes by how much the expectation of $f(Y)$ changes under the additional information that $Y_k$ takes the value $y_k$.
Next, we set 
\begin{equation*}
    D_k(y_1^{k-1}) = \sup_{x,y \in \X} \left\| \Delta_k(y_1^{k-1}, x) - \Delta_k(y_1^{k-1}, y) \right\|_\infty ,
\end{equation*}
where $(y_1^{k-1},x)$ denotes the vector $(y_1, \ldots, y_{k-1}, x)$. Finally, define
\begin{equation*}
    \mathbf{C} := \sup \left\{ \textstyle{\sum_{k=1}^n} |D_k(y_1^{k-1})|^2 \mid (y_1, \ldots, y_n) \in  \X^n \right\}.
\end{equation*}
Then the following result due to McDiarmid holds.
\begin{lemma} [{\cite[Theorem 3.7]{McDiarmid.1998}}]\label{mcd1}
   Under the above conditions, for any $t \geq 0$ one has
    \begin{equation*}
        \prob( f(Y) \geq\E f(Y)+ t) \leq \exp \left( - \tfrac{2t^2}{\mathbf{C}} \right).
    \end{equation*}
\end{lemma}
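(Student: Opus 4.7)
The plan is to prove Lemma \ref{mcd1} by the classical Doob martingale / Azuma--Hoeffding strategy. Define the filtration $\mathcal{F}_k = \sigma(Y_1, \ldots, Y_k)$ (with $\mathcal{F}_0$ trivial) and the associated Doob martingale
\begin{equation*}
    Z_k = \E[f(Y) \mid \mathcal{F}_k], \qquad 0 \leq k \leq n,
\end{equation*}
so that $Z_0 = \E f(Y)$, $Z_n = f(Y)$, and the increments satisfy $Z_k - Z_{k-1} = \Delta_k(Y_1^k)$ by the very definition of $\Delta_k$. The goal is a sub-Gaussian bound on $Z_n - Z_0$ with variance proxy $\mathbf{C}/4$.

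The key step is a conditional Hoeffding estimate for each increment. Fix $y_1^{k-1}$ and note that the function $x \mapsto \Delta_k(y_1^{k-1}, x)$ is a deterministic real-valued function of $x \in \X$ whose oscillation is, by definition, exactly $D_k(y_1^{k-1})$. Hence, conditionally on $\mathcal{F}_{k-1}$, the random variable $Z_k - Z_{k-1}$ is centred (by the tower property) and lies in an interval of length at most $D_k(Y_1^{k-1})$. Applying Hoeffding's lemma conditionally yields, for every $s \in \R$,
\begin{equation*}
    \E\!\left[ e^{s(Z_k - Z_{k-1})} \,\middle|\, \mathcal{F}_{k-1} \right] \leq \exp\!\left( \tfrac{s^2}{8} D_k(Y_1^{k-1})^2 \right).
\end{equation*}
Telescoping $e^{s(Z_n - Z_0)} = \prod_{k=1}^n e^{s(Z_k - Z_{k-1})}$ and iteratively applying the tower property from $k = n$ down to $k = 1$ then gives
\begin{equation*}
    \E \exp\!\bigl( s(f(Y) - \E f(Y)) \bigr) \leq \exp\!\left( \tfrac{s^2}{8} \sum_{k=1}^n D_k(Y_1^{k-1})^2 \right) \leq \exp\!\left( \tfrac{s^2 \mathbf{C}}{8} \right),
\end{equation*}
since the almost-sure bound $\sum_k D_k(Y_1^{k-1})^2 \leq \mathbf{C}$ pulls out of the expectation. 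A Chernoff argument followed by optimisation in $s > 0$ (the minimum is attained at $s = 4t/\mathbf{C}$) delivers
\begin{equation*}
    \prob(f(Y) - \E f(Y) \geq t) \leq \exp\!\left( -\tfrac{2 t^2}{\mathbf{C}} \right).
\end{equation*}

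The only genuinely delicate point is justifying that the conditional range of $Z_k - Z_{k-1}$ given $\mathcal{F}_{k-1}$ is bounded by $D_k(Y_1^{k-1})$; this requires unpacking that $\E[f(Y) \mid Y_1^k]$ really is a measurable function of $Y_1^k$ only (which is standard), and that altering the $k$-th coordinate in this function changes it by at most $D_k$ by construction. Everything else is textbook Azuma--Hoeffding, so I expect no computational obstacles beyond this measurability/definition-chasing step.
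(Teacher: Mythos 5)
The paper does not reprove \cref{mcd1}; it is imported verbatim as Theorem~3.7 of McDiarmid's survey, so there is no ``paper's own proof'' to line up against. Your proof is nonetheless the canonical one, and it is essentially correct: Doob martingale, conditional Hoeffding lemma for each increment (the oscillation of $x \mapsto \Delta_k(y_1^{k-1},x)$ is exactly $D_k(y_1^{k-1})$ and the increment is conditionally centred by the tower property), then Chernoff with the optimiser $s = 4t/\mathbf{C}$.

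One step deserves more care than your phrasing suggests. The intermediate display
\begin{equation*}
\E \exp\!\bigl(s(f(Y)-\E f(Y))\bigr) \leq \exp\!\Bigl(\tfrac{s^2}{8}\textstyle\sum_k D_k(Y_1^{k-1})^2\Bigr)
\end{equation*}
compares a number with a random variable. More importantly, $D_k(Y_1^{k-1})$ is $\mathcal{F}_{k-1}$-measurable but not deterministic, and $\mathbf{C}$ is a \emph{joint} supremum of $\sum_k D_k^2$ over trajectories rather than $\sum_k \sup_{y_1^{k-1}} D_k(y_1^{k-1})^2$. If one naively bounds $e^{s^2 D_k^2/8}$ by its worst-case value at each conditioning stage, one only recovers the (possibly larger) latter quantity in the exponent. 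The clean way to get $\mathbf{C}$ is to observe that
\begin{equation*}
M_k := \exp\!\Bigl(s(Z_k - Z_0) - \tfrac{s^2}{8}\textstyle\sum_{j\leq k} D_j(Y_1^{j-1})^2\Bigr)
\end{equation*}
is a supermartingale by the conditional Hoeffding bound, so $\E[M_n]\leq 1$, and then $\E[e^{s(Z_n-Z_0)}] = \E[M_n \exp(\tfrac{s^2}{8}\sum_j D_j^2)] \leq e^{s^2\mathbf{C}/8}\,\E[M_n] \leq e^{s^2\mathbf{C}/8}$ using the almost-sure bound. Your sentence ``the almost-sure bound \ldots pulls out of the expectation'' is the right instinct, but to make it rigorous one must carry the running sum $\sum_{j\leq k} D_j^2$ through the backward induction rather than peeling off stage-wise suprema. (In the paper's actual application the $D_k$ are bounded by the single constant $\tfrac{1}{n(1-\kappa)}$, so the two exponents coincide and this distinction is invisible; it only matters for the general statement of the lemma.)
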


In order to apply this to the dual distance $d_S (\mu, \mu_n)$, 
we define on $\X ^n$
the distance
\begin{equation} \label{eq:lip:d1n}
d_n^{ ( 1 ) } ( ( x_1, \ldots, x_n ) , (x_1', \ldots, x_n' ) ) = \sum_{k=1}^n d ( x_k, x_k' ).
\end{equation}
Now we use that the function $f\colon  \X ^n \to \R$ defined by $ f(x_1, \ldots, x_n) :=d_S (\mu, \mu_n)$ is Lipschitz with constant $\frac{1}{n}$. That is, for all $ x_1, \ldots, x_n, x_1', \ldots, x_n' \in \X$ we have
\begin{equation*} 
    |f(x_1,  \ldots, x_n)-f(x_1', \ldots, x_n')| \leq \frac{1}{n} \sum_{k=1}^n d(x_k, x_k')
     = \frac{1}{n} d_n^{ ( 1 ) } ( ( x_1, \ldots, x_n ) , (x_1', \ldots, x_n' )).
\end{equation*}
This follows directly from the triangle inequality and the fact that $S \subseteq \operatorname{Lip}_1 ( \X )$. 

To apply \cref{mcd1} to the function $f$, we first need a general estimate for Lipschitz functions of the Markov chain.
\begin{lemma} \label{markovlip}
   Let $n \in \N$ and $f \colon \X^{n+1} \to \R$ be a function which is Lipschitz with respect to the metric $d_{n+1}^{(1)}$ with constant $L$. Then the function $F \colon  \X \to \R$ defined by
   \begin{equation*}
       F(x) = \E^x f(X_0, X_1, \ldots, X_n)
   \end{equation*}
   is Lipschitz with constant $L \sum_{j=0}^n \kappa^j \leq \frac{L}{1-\kappa}$.
\end{lemma}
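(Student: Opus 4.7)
The natural approach is a coupling argument driven by Assumption 1. Given starting points $x,y \in \X$, I would construct simultaneously defined trajectories $(X_j, Y_j)_{j=0}^n$ such that each marginal chain is Markov with kernel $P$, $X_0 = x$, $Y_0 = y$, and the coupling is ``Markovian'' in the sense that at each step $(X_{j+1}, Y_{j+1})$ is drawn, conditionally on $(X_j, Y_j)$, from an (almost) $W_1$-optimal coupling of $P(X_j, \cdot)$ and $P(Y_j, \cdot)$. Then
\begin{equation*}
    F(x) - F(y) = \E\bigl[f(X_0, \ldots, X_n) - f(Y_0, \ldots, Y_n)\bigr],
\end{equation*}
so applying the Lipschitz hypothesis on $f$ inside the expectation reduces the problem to bounding $\E[d(X_j, Y_j)]$ coordinate-wise.

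By Assumption 1 with $D=1$ and $n=1$, the one-step coupling can be chosen so that $\E[d(X_{j+1}, Y_{j+1}) \mid X_j, Y_j] \leq \kappa \, d(X_j, Y_j)$. Taking iterated conditional expectations then yields the contraction estimate
\begin{equation*}
    \E[d(X_j, Y_j)] \leq \kappa^j d(x,y) \qquad \text{for each } 0 \leq j \leq n.
\end{equation*}
Substituting this into the Lipschitz bound $|f(X_0, \ldots, X_n) - f(Y_0, \ldots, Y_n)| \leq L \sum_{j=0}^n d(X_j, Y_j)$ and taking expectations gives
\begin{equation*}
    |F(x) - F(y)| \leq L \sum_{j=0}^n \E[d(X_j, Y_j)] \leq L \, d(x,y) \sum_{j=0}^n \kappa^j \leq \frac{L}{1-\kappa} d(x,y),
\end{equation*}
which is exactly the claim.

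The one genuine subtlety, and the only place the proof is not completely automatic, is the measurable construction of the Markovian coupling on a general Polish space $\X$: one needs a measurable map $(u,v) \mapsto Q(u,v; \cdot)$ such that $Q(u,v;\cdot) \in \Pi(P(u,\cdot), P(v,\cdot))$ and $\int d(u',v')\, Q(u,v; du', dv') \leq \kappa\, d(u,v)$. This follows from standard measurable selection theorems for (near-)optimal transport plans on Polish spaces; alternatively one can fix an $\varepsilon > 0$, select $\varepsilon$-optimal couplings, carry out the whole argument, and then let $\varepsilon \to 0$, which only inflates the right-hand side by a vanishing term. Once this technical point is settled, the contraction estimate propagates trivially along the trajectory and the geometric-series bound closes the argument.
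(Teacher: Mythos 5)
Your proof is correct, but it takes a genuinely different route than the paper's. The paper argues by induction on $n$: writing $F_x'(z)=\E^z f(x,X_0,\dots,X_{n})$, one shows via the Kantorovich duality (\cref{kantdu}) that each conditioning step contributes an extra factor of $\kappa$ to the Lipschitz constant, and the geometric series $\sum_{j=0}^n \kappa^j$ accumulates through the inductive step. You instead build an explicit Markovian coupling of the two trajectories started at $x$ and $y$, use the one-step $W_1$-contraction to get $\E[d(X_j,Y_j)]\le \kappa^j d(x,y)$, and then apply the Lipschitz property of $f$ inside the expectation. The two arguments are in a precise sense dual: yours is a primal coupling construction, the paper's works on the dual (test-function) side of $W_1$. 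The trade-off is exactly what you flag: the coupling route requires a jointly measurable selection of (near-)optimal one-step couplings $(u,v)\mapsto Q(u,v;\cdot)\in\Pi(P(u,\cdot),P(v,\cdot))$, which on a general Polish space needs a measurable-selection theorem (or your $\varepsilon$-approximation workaround). The paper's inductive dual argument never constructs a coupling at all --- it only ever uses $W_1(P(x,\cdot),P(y,\cdot))\le\kappa\,d(x,y)$ through the dual formula --- and hence sidesteps that technicality entirely, at the cost of the slightly less transparent induction. Both proofs deliver the same constant $L\sum_{j=0}^n\kappa^j$; neither approach generalizes to $D>1$, for the same reason the paper notes in its remark.
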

Here $\E^x$ denotes the expectation for the Markov chain starting in $X_0=x$, and $d^{(1)}_{n+1}$ is given by \eqref{eq:lip:d1n}.
\begin{proof}
We show the statement by induction on $n$. If $n=1$ we have for $x,y \in \X$ that
\begin{align*}
    |F(x)-F(y)| &= \left|\E^x f(x,X_1)- \E^y f(y, X_1) \right| \\ &\leq |\E^x[ f(x,X_1)- f(y,X_1)]|+|\E^x f(y,X_1)-\E^y f(y,X_1)| \\
    &\leq Ld(x,y)+ \left| \int f(y,z) P(x, \dx z) - \int f(y,z) P(y, \dx z) \right| \\ &\leq L(1+\kappa)d(x,y),
\end{align*}
where we used that $W_1(P(x, \cdot), P(y, \cdot)) \leq \kappa d(x,y)$ and the Kantorovich duality for $W_1$.

Now suppose that the statement holds for some $n$, and let $F \colon  \X^{n+2} \to \R$ be Lipschitz with constant $L$. For each $x \in \X$ define $F_x'(z) := \E^z f(x, X_0, \ldots, X_n)$, then by the induction hypothesis $z \mapsto F_x'(z)$ is Lipschitz with constant $L \sum_{j=0}^n \kappa ^j$. The Markov property now implies that for $x,y \in \X$
\begin{align*}
    |F(x)-F(y)| \leq& \ |\E^x [f(x, X_1, \ldots, X_{n+1})-f(y, X_1, \ldots, X_{n+1})]| \\&+ |\E^x f(y, X_1, \ldots, X_{n+1})-\E^y f(y, X_1, \ldots, X_{n+1})| \\ \leq & \ Ld(x,y)+ \left| \int F_y'(z) P(x, \dx z) - \int F_y'(z) P(y, \dx z) \right| \\
    \leq & \ \left(L+ \kappa L \textstyle\sum_{j=0}^n \kappa ^j \right) d(x,y) = L \textstyle\sum_{j=0}^{n+1} \kappa^j d(x,y),
\end{align*}
where we again applied the duality. This completes the proof.
\end{proof}
\begin{remark}
Similar ideas have been used in \cite{Dedecker.2015} to estimate the concentration for separately Lipschitz functions of Markov chains. 
Here we really need to assume the contractivity with $D=1$, since otherwise the inductive proof would not be possible.
\end{remark}
\begin{proof}[Proof of \cref{markcon}]
For $1 \leq i \leq j$ we set $X_i^j := (X_i, X_{i+1}, \ldots, X_j)$. As in \cref{mcd1}, for given $x_1, \ldots, x_k \in \X$ we define
\begin{equation*}
    \Delta_k(x_1^k) =\E [f(X_1^n) \mid X_1 = x_1, \ldots, X_k=x_k] - \E[f(X_1^n) \mid X_1=x_1, \ldots, X_{k-1}=x_{k-1}].
\end{equation*}
Now for all $x,y \in \X$ the Markov property implies
\begin{align*}
\Delta_k(x_1^{k-1}, x) - \Delta_k(x_1^{k-1},y)  &= \E [f(x_1^{k-1}, X_k^n) \mid X_k=x] - \E [f(x_1^{k-1}, X_k^n) \mid X_k=y] \\
    &= \E^x f(x_1, \ldots, x_{k-1}, x, X_1, \ldots, X_{n-k}) -  \E^y f(x_1, \ldots, x_{k-1}, y, X_1, \ldots).
\end{align*}
Next, we apply \cref{markovlip}, which leads to
\begin{align*}
    \left| \Delta_k(x_1^{k-1}, x) - \Delta_k(x_1^{k-1},y)\right| &\leq \frac{1}{n} \sum_{j=0}^{n-k} \kappa ^j d(x,y) \leq \frac{1}{n(1-\kappa)} d(x,y).
\end{align*}
Since the space $\X$ is bounded by assumption, we have $d(x,y) \leq 1$. Hence \cref{mcd1} can be applied with $\mathbf{C}=\frac{1}{n(1-\kappa)^2}$ to obtain the result.
\end{proof}

It remains to prove the concentration inequality under transportation assumptions. For this we shall use the well-known fact that a transportation inequality implies sub-Gaussian measure concentration \cite{Bobkov.1999}.
\begin{prop} \label{conct1}
If $\mu$ satisfies $T_1(C)$ then it holds for all $f \in \operatorname{Lip}(\X)$ and $t \geq 0$ that
\begin{equation*}
    \mu(f \geq \mu(f) + t) \leq \exp \left( - \tfrac{t^2}{2C \| f \|_{\operatorname{Lip}}^2} \right).
\end{equation*}
\end{prop}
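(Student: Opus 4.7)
The plan is to use the standard Marton-style argument, combining the $T_1(C)$ transportation inequality with Kantorovich duality applied to $f$ itself as the dual witness.

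First, by replacing $f$ with $f/\|f\|_{\operatorname{Lip}}$, I reduce to the case where $f$ is $1$-Lipschitz; then the claim becomes $\mu(f \geq \mu(f)+t) \leq \exp(-t^2/(2C))$, which, once proved, scales back to the general bound. Fix $t > 0$ and set $A = \{f \geq \mu(f) + t\}$; I may assume $\mu(A) > 0$, otherwise the inequality is trivial. Define the conditional measure $\mu_A(B) := \mu(B \cap A)/\mu(A)$, which is absolutely continuous with respect to $\mu$ with density $\mathbf{1}_A / \mu(A)$.

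Next, I compute the relative entropy directly from the definition:
\begin{equation*}
H(\mu_A \mid \mu) = \int_{\X} \log\!\left(\tfrac{\mathbf{1}_A}{\mu(A)}\right) \dx \mu_A = -\log \mu(A).
\end{equation*}
By the transportation inequality $T_1(C)$ applied to $\nu = \mu_A$, this yields $W_1(\mu, \mu_A) \leq \sqrt{-2C \log \mu(A)}$. On the other hand, since $f$ is $1$-Lipschitz, Kantorovich duality (\cref{kantdu}) gives
\begin{equation*}
W_1(\mu, \mu_A) \geq \mu_A(f) - \mu(f) = \tfrac{1}{\mu(A)}\int_A f \dx \mu - \mu(f) \geq (\mu(f)+t) - \mu(f) = t,
\end{equation*}
using that $f \geq \mu(f)+t$ pointwise on $A$. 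Chaining the two bounds gives $t^2 \leq -2C \log \mu(A)$, i.e.\ $\mu(A) \leq \exp(-t^2/(2C))$, as desired.

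There is no real obstacle here: the argument is essentially the Bobkov--Götze implication from $T_1$ to sub-Gaussian concentration, and all three ingredients, namely the closed-form expression for $H(\mu_A \mid \mu)$, the use of $f$ itself as the dual test function, and the scaling $f \mapsto f/\|f\|_{\operatorname{Lip}}$, are completely standard. The only minor point to be careful about is to verify that $\mu_A \ll \mu$ so that $T_1(C)$ applies as stated in the paper's definition, which is clear from the explicit density above.
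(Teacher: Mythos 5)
Your argument is correct: it is the standard Marton-style derivation of sub-Gaussian concentration from a $T_1$ inequality, combining the explicit entropy formula $H(\mu_A \mid \mu) = -\log \mu(A)$ with $T_1(C)$ on one side and Kantorovich duality (using $f$ itself as the test function) on the other. The paper does not actually prove \cref{conct1} but simply cites it as well known from \cite{Bobkov.1999}, so your proof fills in exactly the step the paper leaves implicit. One small remark: the finiteness of $W_1(\mu,\mu_A)$, needed to make sense of the chain of inequalities, is automatic here since the density $\mathbf{1}_A/\mu(A)$ is bounded and $\mu \in \cP_1(\X)$, hence $\mu_A \in \cP_1(\X)$ as well — which you correctly flagged as the one point requiring care.
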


\begin{proof} [Proof of \cref{cor:conctransport}]
We know that the function
\begin{equation*}
    f \colon  \X^n \to \R, \quad (x_1, \ldots, x_n) \mapsto d_S (\mu, \mu_n), \quad \mu_n = \frac{1}{n} \sum_{k=1}^n \delta_{x_k}
\end{equation*}
is $\frac{1}{n}$-Lipschitz with respect to the metric $d_n^{(1)}$ on $\X^n$. Furthermore, \cite[Theorem 1.1]{Blower.2006} implies that the distribution of $(X_1, X_2, \ldots, X_n)$ satisfies $T_1(C_n)$ with respect to the metric $d_n^{(1)}$ on $\X^n$, where
\begin{equation*}
    C_n = C \sum_{m=1}^n \left( \sum_{k=0}^{m-1} \kappa ^k \right)^2 \leq  C n \left( \sum_{k=0}^\infty \kappa ^k \right)^2 = \frac{C n}{(1-\kappa)^2}.
\end{equation*} Combining this with \cref{conct1} completes the proof.
\end{proof}

\subsection*{Acknowledgments}
 The author is supported by the Deutsche
 Forschungsgemeinschaft (DFG, German Research Foundation) under Germany’s Excellence Strategy EXC 2044–390685587, Mathematics Münster: Dynamics–Geometry–Structure.
This work is based on the author's master's thesis, which was conducted at Bonn University. The author is grateful to his advisor Andreas Eberle for his help and support.

\end{document}